\newtheorem{thm}{Theorem}[section]
\newtheorem{lemma}[thm]{Lemma}
\newtheorem{prop}[thm]{Proposition}
\newtheorem{cor}[thm]{Corollary}
\newtheorem{defin}[thm]{Definition}
\newtheorem{definition}[thm]{Definition}
\newtheorem{example}[thm]{Example}
\newtheorem*{problem}{Open Problem}
\newtheorem{rem}[thm]{Remark}
\def\part{\partial}
\def\done{{1\hskip-2.5pt{\rm l}}}
\def\qed{\vrule height 1.2ex width 1.1ex depth -.1ex}
\newcommand{\Diff}{{\text{Diff}}}
\newcommand{\Ham}{{\text{Ham}}}
\newcommand{\Conj}{{\text{Conj}}}
\newcommand{\C}{{\text{Conj}}}
\newcommand{\supp}{{\text{support}}}
\newcommand{\R}{{\mathbb{R}}}
\newcommand{\T}{{\mathbb{T}}}
\newcommand{\Z}{{\mathbb{Z}}}
\newcommand{\N}{{\mathbb{N}}}
\newcommand{\cG}{{\mathcal{G}}}
\newcommand{\id}{{\text{{\bf 1}}}}
\begin{document}

\title{Conjugation-invariant norms on groups of geometric origin\\
 }

\renewcommand{\thefootnote}{\alph{footnote}}

\author{\textsc Dmitri Burago$^{a}$,\ Sergei Ivanov$^{b}$\ and Leonid Polterovich$^{c}$}

\footnotetext[1]{Partially supported by the NSF Grant DMS-0412166. }
\footnotetext[2]{Partially supported by  RFBR grant 05-01-00939.}
\footnotetext[3]{Partially supported by the Israel Science
Foundation grant $\#$ 509/07.}

\date{\today}

\maketitle

\begin{abstract}
\noindent A group is said to be bounded if it has a finite diameter
with respect to any bi-invariant metric. In the present paper we
discuss boundedness of various groups of diffeomorphisms.
\end{abstract}

\bigskip

\section{Introduction and main results}

\subsection{The main phenomenon}

A group $G$ is said to be {\it bounded} if it is bounded
with respect to any bi-invariant metric (that is, as a metric space,
it has a finite
diameter).

A {\it conjugation-invariant norm} $\nu: G \to [0;+\infty)$ is a
function which satisfies the following axioms:
\begin{itemize}
\item[{(i)}] $\nu(1) = 0$;
\item[{(ii)}] $\nu(f) = \nu(f^{-1}) \;\; \forall f \in G$;
\item[{(iii)}] $\nu(fg) \leq \nu(f)+\nu(g) \;\; \forall f,g \in G$;
\item[{(iv)}] $\nu(f) = \nu (gfg^{-1}) \;\; \forall f,g \in G$;
\item[{(v)}] $\nu(f) > 0$ for all $f\neq 1$.
\end{itemize}

Thus a group is bounded iff every conjugation-invariant norm is
bounded.

\medskip
\noindent{\bf Convention: In this paper we work only with
conjugation-invariant norms, so by default a {\it norm} is a
conjugation-invariant norm.}

\medskip
\noindent If one drops condition (v), $\nu$ is said to be a {\it
pseudo-norm}. It can immediately be converted into a norm by adding
$1$ to all elements except the unity. Hence a group is unbounded if
it admits an unbounded pseudo-norm. Observe that on a simple group
every non-trivial pseudo-norm is automatically a norm: Indeed, the
set of all elements with vanishing pseudo-norm forms a normal
subgroup. Hence in the sequel condition (v) can be dropped
everywhere when we deal with simple groups such as groups of smooth
diffeomorphisms.

\medskip
\noindent Two norms on a group are called {\it equivalent} if their
ratio is bounded away from $0$ and $\infty$. {\it The trivial norm},
which exists on any group, equals $1$ on every element except the
identity.

Given a connected manifold $M$, denote by $\Diff_0(M)$ the identity
component of the group of $C^\infty$ smooth compactly supported
diffeomorphisms. The central phenomenon discussed in this paper is
as follows: {\it in all known to us examples  any norm on
 $\Diff_0(M)$ is equivalent to the trivial one.} Below we confirm this
phenomenon for spheres, all closed connected three-manifolds and the
annulus. However we have neither a proof nor a counter-example for
closed surfaces of genus $\geq 1$ and the M\"{o}bius strip.

\subsection{Setting the stage}

\subsubsection{Conjugation-generated norms}
Many interesting norms come from the following construction: Let $G$
be a group. We say that a set $K\subset G$ {\it
conjugation-generates} (or, for brevity, {\it c-generates}) $G$  if
every element $h\in G$ can be represented as a product
\begin{equation}
\label{eq-decomp-0}h={\tilde h_1} {\tilde h_2}\dots {\tilde h_N}
\end{equation}  where each $\tilde h_i$ is conjugate to some element $h_i \in
K$: ${\tilde h_i}=\alpha_i h_i \alpha_i^{-1}$, $\alpha_i \in G$. In
this case define a norm $q_K(h)$ as the minimal $N$ for which such a
representation exists. We shall say that {\it the norm $q_K$ is
c-generated by the subset $K$.} If $K$ is finite, $G$ is said to be
{\it finitely c-generated}. For instance, every simple group $G$ is
finitely $c$-generated by $K=\{x,x^{-1}\}$ with an arbitrary $x \neq
1$.

Note that the norm $q_K$ has the following extremal property: for
any norm $q$ bounded on $K$ there is a constant $\lambda$ such that
$q \leq \lambda q_K$. Hence, if $K$ is finite, the group $G$ is
bounded if and only if $q_K$ is bounded.

\medskip
\noindent
\begin{example}\label{exam-SL}{\rm
Groups $SL(n,\R)$ for $n \geq 2$ and $SL(n,\Z)$ for $n \geq 3$ are
finitely c-generated by the set $K$ of all elementary matrices whose
off-diagonal term equals $\pm 1$. Moreover we claim that the number
of terms in the decomposition \eqref{eq-decomp-0} is bounded by a
constant which does not depend on $h$.

In the case of $SL(n,\R)$ the claim follows from an appropriate
version of the Gauss elimination process.

As for $SL(n, \Z)$,  denote by $\cal{E}$ the set of all elementary
matrices whose only non-zero off-diagonal element equals to $1$.
There exists $N=N(n) \in \N$ so that every element from $SL(n,\Z)$
can be written as a product of $\leq N$  matrices of the form $E^p$,
where $E \in \cal{E}$ and $p \in \Z$ (in other words, $SL(n,\Z)$
possesses a bounded generation by elements from $\cal{E}$), see
\cite{SL}. The claim readily follows from the fact that each
$E^p=[A,B^p]$ for some $A,B \in \cal{E}$. Let us prove this
identity: let $E_{ij}$ (where $i\ne j$) denotes the elementary
matrix from $\cal{E}$ whose only non-zero off-diagonal element
stands in the $i$-th raw and $j$-th column. Without loss of
generality, put $i=1,j=3$. Then $E_{13}^p=[E_{12},E_{23}^p]$ as
required.

It follows from the claim that the "extremal" norm $q_K$ is bounded,
and hence the groups in question are bounded in view of extremality
of $q_K$. }
\end{example}

\medskip
\noindent
\begin{example}
\label{commnorm} {\rm {\bf The commutator length.} Given a group
$G$, denote by $G'$ its commutator subgroup. The norm on $G'$
c-generated by the set of all simple commutators
$[a,b]=aba^{-1}b^{-1}$ is called the {\it commutator length} and is
denoted by $cl_G$.}
\end{example}

\subsubsection{The role of the commutator
subgroup}\label{subsubsec-role} The next observations suggest that
the commutator subgroup plays a significant role in the study of
boundedness.

\begin{prop}\label{prop-abel}
If $H_1(G):=G/G'$ is infinite then $G$ is unbounded.
\end{prop}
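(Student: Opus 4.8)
The plan is to reduce the statement to a purely group-theoretic fact about abelian groups and then settle that by a short case analysis. Let $\pi\colon G\to A:=H_1(G)=G/G'$ be the abelianization. If $\nu$ is any pseudo-norm on $A$ — note that in an abelian group axiom (iv) is automatic, so $\nu$ is just a symmetric subadditive function with $\nu(0)=0$ — then $\nu\circ\pi$ is a pseudo-norm on $G$: axioms (i)--(iii) are inherited, and (iv) holds because $\pi(gfg^{-1})=\pi(f)$ in the abelian group $A$. Since $\pi$ is surjective, $\nu\circ\pi$ is unbounded whenever $\nu$ is, and, as observed in the introduction, a group carrying an unbounded pseudo-norm is unbounded. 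So it suffices to prove: \emph{every infinite abelian group $A$ admits an unbounded pseudo-norm.}

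For this I would isolate three elementary constructions of pseudo-norms on an abelian group $A$. (a) For any homomorphism $\phi\colon A\to\R$, the function $x\mapsto|\phi(x)|$ is a pseudo-norm, unbounded as soon as $\phi(A)$ is unbounded in $\R$. (b) Fix a direct sum decomposition $A=\bigoplus_{i\in I}C_i$; then $x\mapsto\#\{i: x_i\neq 0\}$, the number of nonzero components, is a pseudo-norm, since the support of $x+y$ lies in the union of the supports of $x$ and $y$; it is unbounded when $I$ is infinite. (c) If $A$ is a $p$-group, then $x\mapsto\log_p(\operatorname{ord} x)$ is a pseudo-norm: in a $p$-group $p^{\max(m,n)}$ annihilates any sum of an element of order $\le p^m$ and one of order $\le p^n$, so $\log_p\operatorname{ord}(x+y)\le\max(\log_p\operatorname{ord} x,\log_p\operatorname{ord} y)$; it is unbounded when $A$ has unbounded exponent.

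Now the case analysis. If $A$ has an element $a$ of infinite order, I would extend the inclusion $\langle a\rangle\cong\Z\hookrightarrow\R$ to a homomorphism $\phi\colon A\to\R$, using that $\R$ is divisible, hence injective in the category of abelian groups; then $\phi(A)\supseteq\Z$ is unbounded, and (a) applies. Otherwise $A$ is a torsion group, so $A=\bigoplus_p A_p$, the direct sum of its $p$-primary components. If infinitely many $A_p$ are nonzero, (b) applied to this decomposition gives an unbounded pseudo-norm. If only finitely many are nonzero, then, $A$ being infinite, some $A_p$ is infinite; since $A_p$ is a quotient of $A$ (project off the other summands), we may replace $A$ by $A_p$ and assume $A$ is an infinite abelian $p$-group. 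If its exponent is unbounded, (c) applies; if $p^kA=0$ for some $k$, then by Pr\"ufer's theorem $A$ is a direct sum of cyclic groups, necessarily of infinitely many of them since $A$ is infinite and each summand is finite, so (b) applies again.

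The reduction and the verification of the pseudo-norm axioms for (a)--(c) are routine; the real content, and the main obstacle, is the torsion case, where homomorphisms to $\R$ carry no information and one must instead rely on the combinatorial pseudo-norm (b) and the order pseudo-norm (c) together with the structure theory of torsion abelian groups (primary decomposition, and Pr\"ufer's theorem that a group of bounded exponent splits as a direct sum of cyclics). The only points that need care are making sure each construction genuinely descends from, or lifts to, a quotient of the original $A=H_1(G)$ (so that it ultimately pulls back to $G$), and the subadditivity of the support-counting norm in (b).
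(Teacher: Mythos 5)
Your proof is correct, but it takes a genuinely different route from the paper's. Both arguments reduce, via the epimorphism $G\to G/G'$ and Lemma~\ref{basic-epi}, to showing that every infinite abelian group admits an unbounded (pseudo-)norm; the two proofs then diverge. The paper handles finitely generated groups via the classification theorem (to get a surjection onto $\Z$), handles countably-but-not-finitely generated groups by the ``generation depth'' norm $g\mapsto\min\{k : g\in\langle g_1,\dots,g_k\rangle\}$ for a fixed enumeration of generators, and then passes to arbitrary abelian groups by extending a norm from a countable subgroup one element at a time and invoking Zorn's lemma. Your proof instead works directly with the full abelian group by a structural case analysis: a non-torsion group maps unboundedly into $\R$ using divisibility/injectivity of $\R$; a torsion group is decomposed into $p$-primary components, where either the ``support size'' norm (infinitely many primary components, or infinitely many cyclic summands via Pr\"ufer's theorem) or the ``$\log_p$ of the order'' norm (unbounded exponent) does the job. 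The trade-off is roughly this: the paper's argument uses only the very elementary structure theory of finitely generated abelian groups, at the cost of an extension-plus-Zorn step whose details (``the norm extends from $H$ to $H'$'') are left to the reader; your argument eliminates that ad hoc extension step and gives a fully explicit pseudo-norm in every case, at the cost of invoking heavier machinery (Baer's criterion for injectivity of $\R$, primary decomposition, Pr\"ufer's theorem on bounded-exponent groups). Both are complete proofs, and your explicit constructions (a)--(c) are a nice supplement; in particular, your constructions (b) and (c) make visible why the resulting norm is unbounded, which is somewhat hidden in the paper's Zorn argument.
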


\medskip
\noindent In particular, {\it an abelian group is bounded if and
only if it is finite.}

\medskip

Note that unbounded norms maybe non-extendable from a normal
subgroups to the ambient  group. Consider, for instance, the
extension $Aff(\Z)$ of $\Z$ by an element $t$ of order 2 and with
one additional relation $tz=z^{-1}t$. Thus $\Z$ is a normal subgroup
of index 2 in $Aff(\Z)$. Of course, $\Z$ has an unbounded norm,
while $Aff(\Z)$ admits no unbounded norms since $t$ is conjugate to
$tz^{2n}$ (by $z^n$) for all integers $n$. However, the situation
changes when one deals with the commutator length on the commutator
subgroup:

\begin{prop}\label{comm-length-bdd} Let $G$ be any group.
If the commutator length on $G'$ is unbounded then $G$ itself is
unbounded.
\end{prop}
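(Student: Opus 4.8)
The plan is to split on the abelianization $H_1(G)=G/G'$: if it is infinite, $G$ is already unbounded by Proposition~\ref{prop-abel}, so from now on assume $Q:=G/G'$ is finite, and in that case manufacture an unbounded conjugation-invariant norm on $G$ out of the commutator length on $G'$. Let $\pi\colon G\to Q$ be the quotient map, fix a set-theoretic section $s\colon Q\to G$ with $s(1)=1$, and for $g\in G$ put $r(g):=s(\pi(g))$, so that $g\,r(g)^{-1}\in G'$. Define
$$\nu_0(g):=cl_{G'}\!\big(g\,r(g)^{-1}\big).$$
Then $\nu_0(1)=0$ and $\nu_0$ restricts to $cl_{G'}$ on $G'$, so $\nu_0$ is unbounded by hypothesis. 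The first main step is to show that $\nu_0$ is an approximate norm: there is a constant $C$, depending only on $G$ and on the chosen section, with
$$\nu_0(gh)\le\nu_0(g)+\nu_0(h)+C,\qquad \nu_0(g^{-1})\le\nu_0(g)+C,\qquad \nu_0(hgh^{-1})\le\nu_0(g)+C$$
for all $g,h\in G$.

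To establish these inequalities I would expand $g\,r(g)^{-1}$ and its analogues for $gh$, $g^{-1}$ and $hgh^{-1}$, move every coset representative to the right, and estimate the $G'$-factors that appear. Three elementary observations do the job. First, a $G$-conjugate of a product of $k$ commutators of elements of $G$ is again a product of $k$ such commutators, so $cl_{G'}$ is invariant under conjugation by \emph{all} of $G$, not just by $G'$. Second, for any $h\in G$ and $x\in Q$ the element $[h,s(x)]$ is a single commutator, hence $cl_{G'}([h,s(x)])\le 1$. Third, the cocycle elements $s(x)s(y)s(xy)^{-1}$ and $s(x)^{-1}s(x^{-1})^{-1}$ all lie in $G'$ and, $Q$ being finite, take only finitely many values, so their $cl_{G'}$ is uniformly bounded. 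Combining these, the subadditivity and symmetry inequalities hold with the constant coming from the third observation, and the conjugation inequality even holds with $C=1$: since $Q$ is abelian one has $r(hgh^{-1})=r(g)$, so $hgh^{-1}r(g)^{-1}$ equals the product of the conjugate $h\big(g\,r(g)^{-1}\big)h^{-1}$ and the single commutator $[h,r(g)]$.

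The second main step is to upgrade $\nu_0$ to a genuine norm. After replacing $C$ by $\max(C,1)$, put $\mu(x):=\min(\nu_0(x),\nu_0(x^{-1}))$ and
$$\|g\|:=\inf\Big\{\sum_{i=1}^m\big(\mu(c_i)+3C\big)\ :\ g=\alpha_1 c_1\alpha_1^{-1}\cdots\alpha_m c_m\alpha_m^{-1},\ m\ge 0\Big\},$$
where the empty product gives $\|1\|=0$. The norm axioms are immediate from the shape of the definition: $\|\cdot\|$ is conjugation-invariant and symmetric because conjugating or inverting a decomposition of $g$ produces a decomposition of $hgh^{-1}$ or $g^{-1}$ of the same total weight; it is subadditive by concatenation of decompositions; and it is positive on $G\setminus\{1\}$ since every nontrivial element requires at least one factor, of weight at least $3C>0$. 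The only substantive point is that iterating the approximate subadditivity of $\nu_0$ (and the conjugation bound) gives $\nu_0(g)\le\|g\|$ for every $g$; hence $\|\cdot\|$ is unbounded, and $G$ is unbounded as claimed.

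I expect the main obstacle to be the bookkeeping in the first step: arranging the expansions of $g\,r(g)^{-1}$, $gh\,r(gh)^{-1}$, and so on, so that each resulting $G'$-factor is manifestly either a conjugate of one of the $G'$-parts of $g$ and $h$, a single commutator, or one of the finitely many cocycle elements — and then, in the second step, checking that the fixed per-factor overhead $3C$ genuinely absorbs the additive error of size $\sim mC$ accumulated when the approximate triangle inequality is applied $m$ times. Both are routine once set up carefully; the rest of the argument is purely formal.
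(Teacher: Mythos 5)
Your argument matches the paper's: the same case split on $|G/G'|$, the same approximate norm built from a set of coset representatives (your $\nu_0(g)=cl_{G'}\big(g\,r(g)^{-1}\big)$ is exactly the paper's $q(hs)=cl_G(h)$ for $g=hs$ with $h\in G'$, $s\in S$), and the same cocycle and single-commutator corrections giving quasi-subadditivity and quasi-conjugation-invariance with a constant coming from the finitely many cocycle values. The only cosmetic difference is that you carry out the upgrade from the quasi-norm to a genuine conjugation-invariant norm explicitly (your decomposition infimum with per-factor weight $\mu(c_i)+3C$), whereas the paper simply appeals to the general principle from Definition~\ref{qnorm} that a q-norm yields an unbounded norm by symmetrizing, maximizing over conjugates, and adding a constant -- both upgrades are standard and correct.
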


\medskip
\noindent Propositions \ref{prop-abel} and \ref{comm-length-bdd} are
proved in Section \ref{subsec-qnorms} below.

\subsubsection{Stably unbounded norms} Given a conjugation-invariant
norm $\nu$ on a group $G$, we define its {\it stabilization} by
$$\nu_{\infty} (f)= \lim_{n \to \infty}\frac{\nu(f^n)}{n}\;.$$ Let
us emphasize that stabilization of a norm is not in general a norm.
An unbounded norm $\nu$ is called {\it stably unbounded} if
$\nu_{\infty}(f) \neq 0$ for some $f \in G$.

For instance, an infinite abelian torsion group is unbounded by
Proposition \ref{prop-abel} but never stably unbounded.

\begin{example}\label{ex-discr-volsup} {\rm
Consider a group $\Z_2^\infty$ of all finite words over $\{0,1\}$
with componentwise addition mod 2 (that is, a direct product of
countably many copies of $\Z_2$). This group admits no
quasi-morphisms since the order of every element is 2. On the other
hand, the length of a word is a norm. There is a natural action of
$\Z_2^\infty$ on $\Z \times \Z_2$: the $i$-th generator swaps
$(i,0)$ and $(i, 1)$. Thus the norm in our example can be
interpreted as "the size of support".}
\end{example}

\begin{problem}\label{quest-stab} {\rm Does there exist a group that does not admit a
stably unbounded norm and yet admits a norm unbounded on some cyclic
subgroup?}
\end{problem}

\subsubsection{Stable commutator length and
quasi-morphisms}\label{subsec-quasim} In what follows we shall focus
on the stable commutator length. Let $G$ be any group. The
commutator length $cl_G$ on $G'$ is stably unbounded if and only if
$G$ admits {\it non-trivial homogeneous quasi-morphisms}
\cite{Bavard}. Recall that a function $r:G \to R$ is called a
quasi-morphism if there exists $C>0$ so that
$$|r(ab)-r(a)-r(b)| \leq C \;\;\forall a,b \in G\;.$$
A quasi-morphism is called {\it homogeneous} if $r(a^n)=nr(a)$ for
all $a\in G$ and $n \in \Z$.  A quasi-morphism is called non-trivial
if it is not a morphism.

\medskip
\noindent{\bf Convention: In this paper we deal with homogeneous
quasi-morphisms only, so by default quasi-morphism means a
homogeneous quasi-morphism.}
\medskip
\noindent

\begin{example}{\rm
$G=SL(2,\Z)$ carries an abundance of quasi-morphisms (cf.~ e.g.
\cite{Barge-Ghys}) and hence the commutator norm on $SL(2,\Z)$ is
stably unbounded. Thus $G$ is unbounded in view of Proposition
\ref{comm-length-bdd}, in contrast with $SL(n,\Z)$ for $n \geq 3$
(see Example \ref{exam-SL} above).}
\end{example}

\medskip
\noindent Introduce the class $\cG$ of groups $G$ with finite
$H_1(G)= G/G'$ (we wish to rule out  conjugation-invariant stably
unbounded norms coming from the first homology, see Proposition
\ref{comm-length-bdd} above). Note that various interesting groups
of diffeomorphisms are simple and hence belong to this class.

\begin{problem}\label{quest-stab-1} {\rm Does there exist a
finitely presented group $G \in \cG$ whose commutator length is
unbounded but stably bounded?}
\end{problem}

\begin{problem}{\rm Does there exist an unbounded finitely presented
group which admits no unbounded quasi-morphisms? }
\end{problem}

\medskip
\noindent A.~Muranov informed us that he has an example of a
finitely generated, but not finitely presented, group from $\cG$
whose commutator length is unbounded but stably bounded. The
existence of an infinitely generated group with the this property
readily follows from Muranov's work \cite{Muranov}, who constructed
a sequence of simple groups $G_i, i \in \N$ of finite commutator
length diameter $n_i$, where $n_i \to \infty$. The infinite direct
product $G=\prod_i G_i$ is as required.

\medskip
\noindent A  mystery related to the notion of stable unboundedness
is as follows.

\begin{problem}\label{quest-stab-3} {\rm Does there exist a group $G \in \cG$
whose commutator length is stably bounded, but which admits a
stably unbounded norm?  In other words, does the existence of a
stably unbounded norm on $G$ yields existence of non-trivial
quasi-morphisms? In fact, we do not know even a single example of
a group from $\cG$ that admits no non-trivial quasi-morphisms but
carries a norm that is unbounded on some cyclic subgroup.}
\end{problem}

\medskip
\noindent Here is a (somewhat artificial) example of groups for
which existence of a stably unbounded norm yields existence of
non-trivial quasi-morphisms. Start with an arbitrary group $G \in
\cG$ and set $\bar G$ to be the extension of $G$ by an element $t$
so that
$$t^2=1,\;\;\;\text{and}\;\;\; t(g_1,g_2)t^{-1} =(g_2,g_1) \;\; \forall g_1,g_2 \in G.$$

\begin{prop}\label{prop-gbar} The group $\bar G$ lies in $\cG$
for every $G \in \cG$.
\end{prop}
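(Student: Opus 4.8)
The plan is to identify $\bar G$ concretely and then deduce finiteness of $H_1(\bar G)$ from finiteness of $H_1(G)$ by passing to an explicit finite quotient. First I would note that $\bar G$ is the wreath product $(G\times G)\rtimes\Z_2$: the relations say exactly that $t$ normalizes $N:=G\times G$ and acts on it by swapping the two coordinates, so we have a split extension $1\to N\to\bar G\to\Z_2\to 1$.

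Next I would observe that $G'\times G'$ equals the commutator subgroup $N'$ of $N$ (the commutator subgroup of a direct product is the product of the commutator subgroups), that $G'\times G'$ is normal in $\bar G$ since $t$ permutes the two direct factors, and hence that $G'\times G'\subseteq\bar G'$. Form the quotient $\Gamma:=\bar G/(G'\times G')$. Dividing the extension above by $G'\times G'$ yields a split extension $1\to H_1(G)\times H_1(G)\to\Gamma\to\Z_2\to 1$, i.e. $\Gamma\cong (H_1(G)\times H_1(G))\rtimes\Z_2$. Since $G\in\cG$, the group $H_1(G)$ is finite, so $\Gamma$ is a finite group, of order $2|H_1(G)|^2$.

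Finally, because $G'\times G'\subseteq\bar G'$, the abelianizations of $\bar G$ and of $\Gamma$ coincide: $H_1(\bar G)=\bar G/\bar G'=\Gamma/\Gamma'=H_1(\Gamma)$. As $\Gamma$ is finite, $H_1(\Gamma)$ is finite, hence $H_1(\bar G)$ is finite and $\bar G\in\cG$. I do not anticipate a genuine obstacle here; the only points requiring a little care are verifying that $G'\times G'$ is a normal subgroup of $\bar G$ (so that $\Gamma$ makes sense and inherits a semidirect-product structure) and that it lies in $\bar G'$, both of which follow at once from the fact that $t$ interchanges the direct factors. Alternatively, one can bypass $\Gamma$ and argue directly that the image of $N$ in $H_1(\bar G)$ is a quotient of the $\Z_2$-coinvariants $(H_1(G)\times H_1(G))_{\Z_2}\cong H_1(G)$ (conjugation by $t$ identifies $(g_1,g_2)$ with $(g_2,g_1)$ in the abelianization), while $H_1(\bar G)$ modulo this image is a quotient of $H_1(\Z_2)=\Z_2$, giving $|H_1(\bar G)|\le 2|H_1(G)|<\infty$.
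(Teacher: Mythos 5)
Your proof is correct and, modulo the level of detail, takes the same route as the paper: the paper merely observes the normal form (equivalently, the semidirect-product structure $\bar G=(G\times G)\rtimes\Z_2$) and declares that the proposition "readily" follows, whereas you spell out the bookkeeping. Both of your variants (passing to $\Gamma=\bar G/(G'\times G')$ and noting $H_1(\bar G)\cong H_1(\Gamma)$, or the direct coinvariants bound $|H_1(\bar G)|\le 2|H_1(G)|$) are sound and amount to the argument the authors have in mind.
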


\begin{prop}\label{thm-gbar} Suppose that for some $G \in \cG$, the group $\bar G$ admits a stably unbounded
norm. Then $\bar G$ admits a non-trivial quasi-morphism.
\end{prop}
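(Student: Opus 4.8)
The plan is to argue by contrapositive: assuming $\bar G$ carries no non-trivial quasi-morphism, I will show that every conjugation-invariant norm on $\bar G$ is stably bounded, i.e.\ $\nu_\infty \equiv 0$. The first step is to understand the group-theoretic structure of $\bar G$. By construction $G\times G$ is a normal subgroup of index $2$ in $\bar G$, with the non-trivial coset represented by $t$ (and every element of that coset has the form $t(g_1,g_2)$, which is conjugate to $t(g_2g_1,1)$ since $t(g_1,g_2)t^{-1}=(g_2,g_1)$ gives $(g_2^{-1},1)\cdot t(g_1,g_2)\cdot (g_2,1)= t(g_1,g_2 g_2^{-1}) = \ldots$ — the precise normal form I would record is that $t(g_1,g_2)$ is conjugate in $\bar G$ to $t(g_1g_2,1)$ and has order dividing the order of $g_1g_2$ times $2$). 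The key observation is that $t$ itself is an involution, and more importantly, for any $h\in \bar G$ lying in the non-trivial coset, $h^2\in G\times G$ is conjugate to an element of the "diagonal-swapped" form, from which one computes $h^2 = (a,b)$ with $(a,b)$ conjugate (via $t$) to $(b,a)$; iterating, $h^{2k}$ is conjugate to $((ab)^k\cdot\text{stuff})$. The cleanest route: since $t(g_1,g_2)$ is conjugate to $t\cdot(g_1g_2,1)$, and $(t(g,1))^2 = t(g,1)t(g,1) = (1,g)(g,1) = (g,g)$, we get that the square of any element of the odd coset is conjugate to a diagonal element $(w,w)$.

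The second step exploits that the diagonal $\Delta=\{(w,w):w\in G\}\subset G\times G$ is, from the point of view of $\bar G$, "small": the element $(w,w) = (w,1)\cdot(1,w)$ and $(1,w)=t(w,1)t^{-1}$, so $(w,w)=(w,1)\cdot t(w,1)t^{-1}$, a product of two conjugates of $(w,1)$. Hence for \emph{any} conjugation-invariant norm $\nu$ on $\bar G$ we have $\nu((w,w))\le 2\nu((w,1))$. Now I bound $\nu((w,1))$ itself: here is where I use the quasi-morphism hypothesis. The restriction of $\nu$ to $G\times G\cong G\times G$ need not be bounded, but its stabilization controls commutator length in the sense of Bavard duality — if $\nu_\infty$ is unbounded on some cyclic subgroup $\langle f\rangle$, one wants to manufacture a quasi-morphism on $\bar G$. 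The honest mechanism is: consider the function $f\mapsto \nu_\infty(f)$ restricted to $G\times G$; composing with the "swap" symmetrization and using that $\bar G/(G\times G)=\Z_2$, any $\nu_\infty$-unboundedness on an element of $G\times G$ that survives to $\bar G$ would produce, via the commutator-length/quasi-morphism correspondence of \cite{Bavard} applied in $\bar G$ together with Proposition \ref{comm-length-bdd}, a non-trivial homogeneous quasi-morphism on $\bar G$ — contradiction. So $\nu_\infty$ vanishes on $G\times G$, hence on the diagonal, hence (by step one, since $h^2$ is conjugate to a diagonal element so $\nu_\infty(h)=\tfrac12\nu_\infty(h^2)=\tfrac12\nu_\infty((w,w))=0$) on the odd coset as well. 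Therefore $\nu_\infty\equiv 0$ on $\bar G$, i.e.\ $\bar G$ has no stably unbounded norm.

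I expect the main obstacle to be the step asserting "$\nu_\infty$ unbounded on $G\times G$ $\Rightarrow$ non-trivial quasi-morphism on $\bar G$." A stably unbounded conjugation-invariant norm does not, in general, yield a quasi-morphism (that is exactly the content of Open Problem \ref{quest-stab-3}); what saves us here is the very special structure of $\bar G$, namely that conjugation by $t$ swaps the two factors, which forces any $\nu$-large element of $G\times G$ to be "spread symmetrically," and this symmetry is what lets one run Bavard duality. Concretely, I would take an element $f=(g,1)\in G\times G$ with $\nu_\infty(f)>0$ and show $\nu_\infty$ is also positive on the commutator $[t,(g,1)] = t(g,1)t^{-1}(g,1)^{-1} = (1,g)(g^{-1},1) = (g^{-1},g)$, an element of the commutator subgroup $\bar G'$; positivity of $\nu_\infty$ on a commutator-subgroup element, combined with Proposition \ref{comm-length-bdd}'s proof technique, should be pushed to give unbounded stable commutator length on $\bar G$, whence a quasi-morphism by \cite{Bavard}. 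Making the inequality $\nu_\infty((g^{-1},g))>0$ follow cleanly from $\nu_\infty((g,1))>0$ — using only conjugation-invariance and the swap relation — is the crux, and I would handle it by writing $(g^{-n},g^n)$ as a product of a bounded number of conjugates of $(g^n,1)$ and of $(g^{-n},1)$ and running the limit.
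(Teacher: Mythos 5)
Your proposal correctly identifies the structural features of $\bar G$ (the normal subgroup $G\times G$ of index $2$, the swap action of $t$, the diagonal decomposition $(w,w)=(w,1)\cdot t(w,1)t^{-1}$) and correctly locates the crux of the argument. But the mechanism you propose for resolving that crux does not work, and the gap is exactly where you suspect it is.

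The problem is one of direction. You want to show: $\nu_\infty((g,1))>0$ for some $g$ forces a non-trivial quasi-morphism on $\bar G$. Your proposed route is to pass to a commutator $[t,(g,1)]=(g^{-1},g)$, show $\nu_\infty((g^{-1},g))>0$, and then invoke Bavard duality. But (a) your suggested device of writing $(g^{-n},g^n)$ as a product of two conjugates of $(g^{\pm n},1)$ only gives the triangle inequality $\nu((g^{-n},g^n))\le 2\nu((g^n,1))$, i.e.\ an \emph{upper} bound on $\nu_\infty((g^{-1},g))$ rather than the lower bound you need; and (b) even if you had $\nu_\infty>0$ on a commutator element, there is no general implication ``stably unbounded norm $\Rightarrow$ unbounded $scl$'' — this is precisely the content of Open Problem \ref{quest-stab-3}, so appealing to ``the proof technique of Proposition \ref{comm-length-bdd}'' cannot close the loop. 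Proposition \ref{comm-length-bdd} runs in the opposite direction (unbounded $cl$ gives an unbounded norm).

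The missing idea is the displacement inequality, Theorem \ref{thm-master}(ii) (equation \eqref{eq-master-1}). Set $H=\{(f,1):f\in G\}\subset\bar G$. Then $t$ displaces $H$, since $\Conj_t(H)=\{(1,f)\}$ commutes with $H$. The Eliashberg–Polterovich inequality then gives, for any conjugation-invariant norm $\nu$ on $\bar G$ and any $z\in H'$,
$$\nu(z)\le 4\nu(t)\cdot cl_H(z).$$
After arranging (by passing to a power, using that $H_1(G)$ is finite) an element $u=(g,1)$ with $g\in G'$ and $\nu_\infty(u)>0$, you substitute $z=u^k$, divide by $k$, and take the limit to get $0<\nu_\infty(u)\le 4\nu(t)\, scl_H(u)$, forcing $scl_G(g)>0$ and hence a non-trivial quasi-morphism on $G$ by Bavard's theorem. (The paper then lifts this to a quasi-morphism on $\bar G$ via $\bar r(h)=r(g_1)+r(g_2)$ in the normal form $h=(g_1,g_2)$ or $h=(g_1,g_2)t$.) The displacement trick is what converts a \emph{lower} bound on $\nu$ into a \emph{lower} bound on $scl$; without it, the argument cannot be completed.
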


\subsubsection{Quasi-norms}\label{subsec-qnorms}
\begin{definition}
\label{qnorm} Let $G$ be a group. We say that a function $q: G \to
[0;+\infty)$ is a {\it a quasi-norm} (for brevity, a {\it q-norm})
if:
\begin{itemize}
\item[{(i)}] $q$ is quasi-subadditive: there is a constant $c$ such that
$$q(ab) \leq q(a)+q(b)+c\;;$$
\item[{(ii)}] $q$ is quasi-conjugation-invariant: there is a constant $c$ such
that $$|q(b^{-1}ab) - q(b)| \leq c\;;$$
\item[{(iii)}] $q$ is unbounded.
\end{itemize}
\end{definition}

One can see that in fact the existence of a q-norm implies the
existence of an unbounded norm: This norm can be constructed by (i)
symmetrization: taking the maximum of the norm of $a$ and $a^{-1}$
for each $a$, (ii) redefining the norm of $a$ to be the maximum of
norms of its conjugates $b^{-1}ab$, and (iii) by adding a
sufficiently large constant to the norm of all elements excluding
the identity.

{\it Hence a group is unbounded if it admits a q-norm; in other
words, the existence of unbounded norms and q-norms are equivalent.}
However q-norms are often defined in a more natural way: A
motivating example is provided by the absolute value of a
non-trivial homogeneous quasi-morphism. Another advantage of q-norms
is that they behave nicely under epimorphisms:

\begin{lemma}
\label{basic-epi} The pull-back of a q-norm under an epimorphism is
a q-norm. In particular, if a group $G$ admits a homomorphism onto
an unbounded group, $G$ itself is unbounded.
\end{lemma}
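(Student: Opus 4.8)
The plan is to pull the q-norm back directly and check the three axioms of Definition \ref{qnorm} one at a time. Let $\phi\colon G\to H$ be the epimorphism and $q$ a q-norm on $H$, with quasi-subadditivity and quasi-conjugation-invariance both witnessed by a constant $c$. Put $\tilde q:=q\circ\phi$. For axiom (i) one computes $\tilde q(ab)=q(\phi(a)\phi(b))\le q(\phi(a))+q(\phi(b))+c=\tilde q(a)+\tilde q(b)+c$, using only that $\phi$ is a homomorphism. Axiom (ii) is handled identically: $|\tilde q(b^{-1}ab)-\tilde q(a)|=|q(\phi(b)^{-1}\phi(a)\phi(b))-q(\phi(a))|\le c$.

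The only point where surjectivity is used is axiom (iii). Since $q$ is unbounded on $H$, for every $M$ there is $h\in H$ with $q(h)>M$; because $\phi$ is onto, the fiber $\phi^{-1}(h)$ is nonempty, and for any $g$ in it we get $\tilde q(g)=q(h)>M$. Hence $\tilde q$ is unbounded, and all three axioms hold, so $\tilde q$ is a q-norm on $G$.

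For the final assertion, suppose $G$ surjects onto an unbounded group $H$. Then $H$ carries an unbounded conjugation-invariant norm $\nu$, and $\nu$ is in particular a q-norm (take the constants in (i) and (ii) to be $0$). Pulling $\nu$ back along the epimorphism gives a q-norm on $G$ by the first part, and, as observed in the paragraph preceding the lemma, the existence of a q-norm on $G$ implies that $G$ admits an unbounded (conjugation-invariant) norm; that is, $G$ is unbounded.

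I do not expect any genuine obstacle here: the statement is a formal consequence of the definitions. The one thing worth making explicit is the asymmetry just noted — quasi-subadditivity and quasi-conjugation-invariance of the pull-back survive under an arbitrary homomorphism, whereas unboundedness of the pull-back really does require surjectivity, since a homomorphism with bounded image would produce a bounded pull-back.
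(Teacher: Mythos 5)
Your proof is correct and is exactly the routine verification the paper has in mind when it says the lemma "follows immediately from the definitions and discussion above": pull back along $\phi$, observe that quasi-subadditivity and quasi-conjugation-invariance are preserved by any homomorphism, and use surjectivity to transport unboundedness. (You also silently use the intended reading of Definition \ref{qnorm}(ii), namely $|q(b^{-1}ab)-q(a)|\le c$ rather than the paper's apparent typo $|q(b^{-1}ab)-q(b)|\le c$, which is the right call.)
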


\medskip
\noindent This follows immediately from the definitions and
discussion above. Let us apply the lemma for proving results stated
in \ref{subsubsec-role}:

\medskip
\noindent{\bf Proof of Proposition \ref{prop-abel}:}

\medskip
\noindent{\sc Step 1:} Let us show that any infinite abelian group
$G$ admits an unbounded norm.

If $G$ is finitely generated, than by the classification theorem it
has a $\Z$ as a direct factor, and hence it admits an epimorphism
onto $\Z$. Thus $G$ admits an unbounded norm by Lemma
\ref{basic-epi}.

For a countably generated $G$, let us enumerate its generators $g_1,
g_2, \dots$. Define the norm of $g$ to be the smallest $k$ such that
$g$ lies in the subgroup generated by $g_1, g_2, \dots , g_k$. This
norm is unbounded.

In general, any infinite abelian group contains an infinite finitely
or countably generated subgroup, and the above construction provides
us with a norm on this subgroup $H$. Now choose any element $g$ from
$G \setminus H$ and consider a subgroup $H'$ generated by the union
of $H$ and $g$. Combining the easily verifiable fact that the norm
extends from $H$ to $H'$ with Zorn's lemma completes the proof.

\medskip
\noindent{\sc Step 2:} Assume now that $G/G'$ is infinite. By Step
1, it admits an unbounded norms. Look at the epimorphism $G \to
G/G'$. Applying Lemma \ref{basic-epi} we conclude that $G$ is
unbounded. \qed

\medskip
\noindent {\bf Proof of Proposition \ref{comm-length-bdd}:} If
$[G,G]$ has infinite index, look at the epimorphism $G \to H:=G/G'$.
The group $H$ is an infinite abelian group, thus by Proposition
\ref{prop-abel} $H$ is unbounded, and hence $G$ is unbounded in view
of Lemma \ref{basic-epi}.

Otherwise, if $H$ is finite,  one can check that the commutator norm
can be extended from the commutator to the whole group (even though
in general q-norms cannot be extended from finite index subgroups,
see an example above). Indeed, pick a (finite!) set $S$ of
representatives from cosets of $G'$. Then every element of $G$ can
be uniquely written as $hs$ where $h\in G'$, $s\in S$. Define a
q-norm of such an element $g=hs$ by $q(g)=cl_G(h)$. The approximate
conjugation invariance of this norm follows from the fact that
conjugation can be written as a multiplication by a commutator (and
hence it changes the norm by at most~1). To prove the approximate
triangle inequality, note that for $g_1=h_1s_1$ and $g_2=h_2s_2$
$$g_1g_2 = h_1h_2[h_2^{-1},s_1]s_1s_2\;.$$
Write $$s_1s_2 = h(s_1,s_2)t(s_1,s_2)\;,$$ where $h(s_1,s_2) \in G'$
and $t(s_1,s_2) \in S$. Thus
$$||g_1g_2|| = cl_G(h_1h_2[h_2^{-1},s_1]h(s_1,s_2))\;.$$
Put $C= \max_{s_1,s_2 \in S} cl_G(h(s_1,s_2))$. Applying the
triangle inequality for the commutator length, we get
$$||g_1g_2|| \leq cl_G(h_1) + cl_G(h_2) + 1 + C =
||h_1||+||h_2||+1+C\;.$$ Thus $q$ is indeed a q-norm. \qed

\subsubsection{Fine norms}
A norm $\nu$ on $G$ is called {\it fine} if $0$ is a limit point of
$\nu(G)$. Otherwise the norm is called, following a suggestion by
Yehuda Shalom, {\it discrete}. For instance, conjugation-generated
norms assume integer values only and hence are discrete. On the
other hand a bi-invariant Riemannian metric on a compact Lie group
gives rise to a bounded fine norm on the group.

\subsubsection{Meager groups}
A norm $\nu$ on a group is {\bf not} equivalent to the trivial norm
if it is either unbounded  or fine. A group $G$ is called {\it
meager} if every conjugation-invariant norm on $G$ is equivalent to
the trivial one (i.e. is bounded and discrete).

\subsection{Norms on diffeomorphism groups}

\subsubsection{Smooth diffeomorphisms}
In this section we present the main results of the paper which deal
with norms on groups $\Diff_0(M)$, where $M$ is a smooth connected
manifold. We start with the case of closed manifolds.

\begin{thm}[Main Theorem]\label{thm-1}
 $\;$\begin{itemize}
\item[{(i)}] The group $\Diff_0(M)$ does not admit a fine conjugation-invariant
norm for all connected manifolds $M$.
\item[{(ii)}] The group $\Diff_0(S^n)$ is meager (where $S^n$ is a
sphere);
\item[{(iii)}] The group $\Diff_0(M)$ is meager for any
closed connected 3-dimensional manifold $M$.
\end{itemize}
\end{thm}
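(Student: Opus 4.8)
The plan is to prove the three parts in order of increasing difficulty, with (i) serving as a warm-up that already contains the key mechanism — a "fragmentation plus infinite swindle" argument — and (ii), (iii) requiring in addition a bounded generation statement by conjugates of a single displacement-type diffeomorphism.

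\medskip
\noindent\emph{Part (i).}
First I would show that $\Diff_0(M)$ admits no fine norm, i.e. that any conjugation-invariant norm $\nu$ is bounded below away from $0$ on $\Diff_0(M)\setminus\{1\}$. The idea is a portable version of the classical argument that the commutator length on a transformation group is bounded: I would fix a small ball $B\subset M$ and a diffeomorphism $\phi$ supported in $B$ with $\nu(\phi)$ arbitrarily small (using fineness), then use the fact that one can find infinitely many disjoint copies of $B$ inside any chart and assemble a single diffeomorphism $\Psi$ that is simultaneously conjugate to each of infinitely many independent "copies" of $\phi$ — the infinite swindle of Mather/Thurston. Concretely, writing a target element $f$ as a commutator or short product of things conjugate to $\phi$ and absorbing the tail into $\Psi$, one gets a uniform bound $\nu(f)\le \mathrm{const}$ independent of how small $\nu(\phi)$ was, forcing $\nu(\phi)=0$, a contradiction with axiom (v). (Equivalently: on the simple group $\Diff_0(M)$ a norm vanishing somewhere nontrivially vanishes identically.) The only inputs are fragmentation of compactly supported isotopies (Palais--Cerf) and the existence of room for countably many disjoint displaced copies, which holds on any connected manifold.

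\medskip
\noindent\emph{Parts (ii) and (iii): boundedness.}
By part (i) it remains to prove boundedness, i.e. that some finite c-generating set $K$ has $q_K$ bounded, by the extremality remark in Section~1.2. The strategy is:
(a) use fragmentation to write any $f\in\Diff_0(M)$ as a bounded-length product of diffeomorphisms each supported in a displaceable ball — here "bounded length" uses that $S^n$ and any closed $3$-manifold admit a cover by a fixed number of such balls, together with the standard trick (due to Tsuboi, and going back to the covering-dimension bound) that a diffeomorphism isotopic to the identity can be written as a product of a number of "portable" pieces controlled by $\dim M+1$, not by the complexity of the isotopy;
(b) show that a diffeomorphism supported in a ball $B$ that can be displaced off itself by an ambient diffeomorphism has bounded $q_K$ — this is again the swindle: if $\theta$ displaces $B$, then $g$ supported in $B$ satisfies $g=[\,\widetilde g,\theta\,]$ where $\widetilde g$ is the infinite product $\prod_{k\ge 0}\theta^k g\theta^{-k}$ (well-defined since the supports are disjoint), so $g$ is a product of two conjugates of $g^{\pm1}$... which only shows $cl$ is bounded on displaceable supports relative to that support. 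To turn this into a bound by a \emph{fixed} $K$, fix one model diffeomorphism $x$ supported in a standard ball; any $g$ supported in a displaceable ball is conjugate in $\Diff_0(M)$ to something supported in that standard ball, but of course not necessarily to $x$ itself. The resolution is the classical fact that $\Diff_0$ of a ball (equivalently, compactly supported $\Diff_0(\RR^n)$) is \emph{uniformly perfect and boundedly c-generated by $\{x,x^{-1}\}$}: every compactly supported diffeomorphism of $\RR^n$ is a product of a bounded number of conjugates of $x^{\pm1}$, which one proves by combining the swindle above (to get fixed commutator length) with the fact that a single nontrivial $x$ c-generates any simple group — but here we need the number of factors uniform, which follows because all balls in $\RR^n$ are conjugate by compactly supported diffeomorphisms and the swindle gives a conjugation-count independent of $g$.

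\medskip
\noindent\emph{Where the difficulty sits.}
The genuinely hard, dimension-sensitive step is (a): bounding the number of "displaceable-ball" pieces needed to fragment an arbitrary element of $\Diff_0(M)$ by a constant depending only on $M$. For $S^n$ this is easy because $S^n$ is covered by two balls and one can push support around. For a general closed $3$-manifold $M$ one needs: (1) $M$ admits a handle/triangulation structure so that it is covered by a bounded number of balls $B_1,\dots,B_r$ each displaceable (here one uses $3$-dimensionality — e.g. a Heegaard decomposition, or the fact that any closed $3$-manifold is parallelizable and admits an open book / round handle structure giving displaceable charts), and (2) a partition-of-unity-style fragmentation that, crucially, does not blow up the count: this is exactly Tsuboi's refinement showing $\dim M+1$ suffices, using that an isotopy can be split according to a colouring of the nerve of the cover by $\dim M+1$ colours. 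I expect the main obstacle, and the place where closedness of $M$ and $\dim M=3$ are really used, to be verifying that every closed connected $3$-manifold has a finite cover by displaceable balls with controlled nerve — the swindle arguments in (b) and in (i) are robust and dimension-free, but the geometric input that there is "enough room to displace" and "few enough pieces" is special to low dimensions (and is exactly why the genus $\ge 1$ surface and the M\"obius strip are left open). Once (a) is in place, (ii) and (iii) follow by concatenating: $q_K(f)\le \sum_{j} q_K(f_j)\le r\cdot C$, a bound independent of $f$, so $q_K$ is bounded and $\Diff_0(M)$ is bounded; combined with part (i), it is meager.
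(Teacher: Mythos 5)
Your proposal captures some of the right intuitions (displaceability, pushing support into a chart, the special role of a Heegaard structure in dimension $3$) but it diverges from the paper's actual route in ways that leave genuine gaps.

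\textbf{Part (i).} You ask for a small-norm $\phi$ \emph{supported in a prescribed ball $B$}. Fineness of $\nu$ gives elements of arbitrarily small norm, but you have no control on where they are supported, and conjugation can move a ball but cannot shrink the support of an \emph{arbitrary} small-norm element into it. The paper's argument sidesteps this: take any $h\neq\id$ with $\nu(h)<\epsilon$; since $h\neq\id$ it \emph{displaces} some ball $C$; conjugate $C$ to your fixed $B$, so $\phi=\psi h\psi^{-1}$ displaces $B$ and has $\nu(\phi)=\nu(h)<\epsilon$; then the Eliashberg--Polterovich inequality $\nu([f,g])\le 4\nu(\phi)$ for $f,g$ supported in $B$ kills $\nu([f,g])$. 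The ``displaces'' versus ``supported in'' distinction is exactly what makes the argument go.

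\textbf{Parts (ii) and (iii).} Your step (a) --- a bounded fragmentation of an arbitrary $f\in\Diff_0(M)$ into a \emph{uniformly} bounded number of pieces supported in displaceable balls, via a cover of $M$ and a nerve-colouring trick --- is precisely the thing the paper does \emph{not} have, and it is essentially equivalent to the whole problem: Proposition~\ref{thm-frag} says meagerness is equivalent to boundedness of the fragmentation norm, and the paper leaves this open for surfaces. Invoking a ``Tsuboi-type'' $\dim M+1$ bound here is circular unless you can actually prove it; the standard fragmentation lemma gives a number of factors that depends on the isotopy, not just on the cover. The paper's route is different: it introduces \emph{portable} manifolds, proves via the $m$-displaceability inequality of Theorem~\ref{thm-master}(i) that $\Diff_0$ of a portable manifold has every norm bounded by $14\nu(\phi)$ (Theorem~\ref{thm-portable}), and then reduces: for $S^n$, Lemma~\ref{lem-discs} writes $f=gh$ with $g,h$ compactly supported in $S^n\setminus\{\mathrm{pt}\}\cong\R^n$ (a portable manifold, not a ball cover); for a closed $3$-manifold, the decomposition $f=h\phi\xi\eta\theta$ into five pieces supported in a ball, $M\setminus K$, $U$, $M\setminus L$, $U$ (all portable) rests on two disjoint Heegaard \emph{graphs} and the substantial Fundamental Lemma~\ref{lem-fund} about isotopies of graphs, which has no analogue in your outline.

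\textbf{On the swindle.} The infinite product $\tilde g=\prod_{k\ge 0}\theta^k g\theta^{-k}$ is only meaningful when the iterates $\theta^k(B)$ eventually leave every compact set, i.e.\ on a non-compact model like $\R^n$; on a closed $M$ it does not converge. You do deploy it inside a chart, which is fine, but note that the paper deliberately avoids the infinite swindle in favour of the \emph{finite} $m$-displaceability argument (Lemmas~\ref{rearrange-id}--\ref{lem-two-Fcom}), which writes an element of commutator length $m$ in $H$ as a product of seven $F$-commutators with $F$ a single $m$-displacing element. This is what yields the clean uniform bound $14\nu(\phi)$, and it is also what gives the commutator-length estimates of Theorem~\ref{thm-uni-perf}, which your swindle version would not control.

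In short: your (i) needs the ``displace a ball, don't prescribe a support'' fix; your (ii)--(iii) rest on an unproved bounded-fragmentation claim, whereas the paper replaces it with the portability/$m$-displaceability machinery plus concrete low-dimensional decompositions (a two-chart decomposition for $S^n$, and the Heegaard-graph Fundamental Lemma for $3$-manifolds).
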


Let us give two important examples of conjugation-invariant norms on
$\Diff_0(M)$.

\begin{example} \label{ex2} {\rm {\bf The commutator length:}
Since $\Diff_0(M)$ is a simple group (see e.g. \cite{Banyaga}) it
coincides with its commutator subgroup and hence the commutator
length (see Example \ref{commnorm}) is a well-defined invariant norm
on $\Diff_0(M)$. Introduce the {\it commutator length diameter}
$cld(M) \in \N \cup \infty$ as $\max cl(f)$ over all $f \in
\Diff_0(M)$.}
\end{example}

\bigskip

\begin{thm}\label{thm-uni-perf}
$\;$\begin{itemize}
\item[{(i)}] For the sphere, $cld(S^n) \leq 4$;
\item[{(ii)}] For any closed connected 3-dimensional manifold $M$,
$cld(M) \leq 10$.
\end{itemize}
\end{thm}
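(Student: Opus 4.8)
The plan is to bound the commutator length diameter by reducing every diffeomorphism to a few pieces supported in balls (or in a single ball), and then estimating the commutator length of such pieces by an explicit "displacement" trick. The key elementary lemma is the following folklore fact: if $f\in\Diff_0(M)$ is supported in an open set $U$, and there exists $g\in\Diff_0(M)$ such that $U,\,g(U),\,g^2(U),\dots$ are pairwise disjoint (an "infinite displacement"), then $f$ is a single commutator, $f=[g,h]$ for a suitable $h$; the classical argument sets $h$ to be the infinite product $\prod_{k\ge0} g^k f g^{-k}$, which is well defined precisely because the supports are disjoint, and one checks $[g,h]=f$ by telescoping. In fact a two-fold displacement ($U\cap g(U)=\emptyset$ and one more relation) already gives commutator length $\le 2$; this is the mechanism behind all the bounds.

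For the sphere $S^n$ (part (i)), first I would use the standard fragmentation/fact that any $f\in\Diff_0(S^n)$ can be written as a product of two diffeomorphisms, each supported in a ball: indeed cover $S^n$ by two balls $B_+,B_-$ whose complements are also balls, and fragment $f=f_+f_-$ with $\supp f_\pm\subset B_\pm$ — this uses the isotopy-extension theorem and the fact that $\Diff_0$ of a ball is "big". A diffeomorphism supported in a ball $B\subset S^n$ can be pushed by an ambient diffeomorphism into an arbitrarily small ball, and a small ball in $S^n$ admits an infinite displacement (shrink it and march it toward a point along a flow), so each $f_\pm$ is a single commutator by the lemma above. Hence $cl(f)\le 2$ naively; the factor $4$ in the statement presumably comes from needing to symmetrize or from the precise bookkeeping of the two balls, so I would expect $cld(S^n)\le 2$ by this argument and would then reconcile with the stated $\le 4$ (perhaps the authors allow slack, or handle the two balls less efficiently).

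For a closed $3$-manifold $M$ (part (ii)), the extra work is that $M$ need not be coverable by two balls, and a ball in a general $M$ may not admit an infinite displacement. Here I would invoke the structure of $3$-manifolds: by Moise/triangulability $M$ has a Heegaard splitting, or more simply a handle decomposition with a bounded number of handles of each index relative to a spine; concretely, $M$ minus a point (or minus a finite set) deformation retracts onto a $2$-complex, and one can cover $M$ by a bounded number — I'd aim for about $5$ — of open balls $U_1,\dots,U_5$ such that each $U_i$ can be displaced off itself inside $M$. Fragmenting $f$ as a product $f_1\cdots f_5$ with $\supp f_i\subset U_i$ (again by isotopy extension, inductively peeling off one chart at a time), and making each $f_i$ a product of at most two commutators via a two-fold displacement of $U_i$, gives $cl(f)\le 10$. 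The main obstacle, and the place I would spend the most care, is precisely the covering-by-displaceable-balls step: one must show that every closed orientable (and then non-orientable, via the orientation double cover plus a separate argument) $3$-manifold admits such a bounded displaceable-ball cover, which is where the low dimension is genuinely used — the complement of a point is homotopy-equivalent to a $2$-complex, so it has "thin" handle structure, and balls near a spine of complexity controlled independently of $M$ can be slid along the extra dimension to be displaced. Getting the constant down to $5$ charts (hence $10$) rather than something larger is the quantitative heart of part (ii).
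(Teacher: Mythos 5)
Your overall skeleton — decompose $f$ into a bounded number of pieces supported in ``nice'' open sets, then bound the commutator length of each piece by a displacement trick — matches the paper's strategy, but two of your key ingredients don't hold up, and the 3-manifold decomposition is not the one actually used.

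First, the ``folklore'' claim that a diffeomorphism $f$ supported in $U$ with $U, g(U), g^2(U),\dots$ pairwise disjoint is a \emph{single} commutator $[g,h]$ via the infinite product $h=\prod_{k\ge 0} g^kfg^{-k}$ is Anderson's trick, and it is not available here: in $\Diff_0$ (compactly supported smooth diffeomorphisms) the infinite product generally fails to be smooth at the accumulation set of the supports (or fails to be compactly supported at all), so $h\notin\Diff_0$. This is precisely why the paper proves only $cld(M)\le 2$ for a portable $M$ (Theorem~\ref{thm-uni-perf-port}), via a finite algebraic commutator manipulation (Theorem~\ref{thm-master}(i) / Corollary~\ref{cor-port}): one uses $m$-displacement for the finite $m=cl_H(x)$ and shows $x$ is a product of \emph{two} commutators, not one. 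Your expectation $cld(S^n)\le 2$ is therefore not justified by this argument; the paper's bound $4=2\times 2$ comes from writing $f=gh$ with each factor supported in $S^n\setminus\{\text{pt}\}\cong\R^n$ (Lemma~\ref{lem-discs}) and applying $cld(\R^n)\le 2$ to each.

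Second, for closed 3-manifolds, your plan to ``cover $M$ by $5$ displaceable balls'' has a genuine gap. To apply Theorem~\ref{thm-master}(i) to an element $x$ with $cl_H(x)=m$ you need to $m$-displace $H$, and since $cl_H(x)$ is a priori unbounded you need $m$-displacement for \emph{all} $m$. A fixed ball $B$ in a closed $M$ cannot be $m$-displaced inside $M$ for arbitrarily large $m$ (volume obstruction), so ``a two-fold displacement of $U_i$'' does not give $cl\le 2$ for all of $\Diff_0(U_i)$. What the paper does instead is view each piece as $\Diff_0$ of an \emph{abstract open portable manifold} (a ball, an open handlebody, or the complement of a graph), where $m$-displacement for all $m$ simultaneously is available internally (Lemma~\ref{lem-port-1}), and then pushes the $cl\le 2$ bound forward into $\Diff_0(M)$. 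The decomposition itself is not a cover-by-balls: it uses a Heegaard graph $L$ and a disjoint Heegaard graph $K$, the Fundamental Lemma~\ref{lem-fund}, and a fairly delicate isotopy argument to write $f=h\phi\xi\eta\theta$ with $h$ supported in a ball, $\phi\in\Diff_0(M\setminus K)$, $\xi,\theta\in\Diff_0(U)$ ($U$ a tubular neighborhood of $L$), and $\eta\in\Diff_0(M\setminus L)$ — five pieces supported in portable open $3$-manifolds (mostly handlebodies, not balls). It is not known (and it is not claimed) that one can cover a general closed $3$-manifold by a bounded number of balls in the way you suggest; the use of handlebody complements of Heegaard graphs is essential.
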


\begin{example} \label{ex2-1} {\rm {\bf The fragmentation norm:}
Every element $f \in \Diff_0(M)$ can be represented as a finite
product of diffeomorphisms supported in an embedded open ball (this
is the famous fragmentation lemma, see e.g. \cite{Banyaga}). The
{\it fragmentation norm} $frag(f)$ is the minimal number of factors
required to represent an element $f \in \Diff_0(M)$. Clearly, $frag$
is an conjugation-invariant norm on $\Diff_0(M)$. The next result
shows that the fragmentation norm is responsible for meagerness of
$\Diff_0(M)$. }
\end{example}

\begin{prop}\label{thm-frag} The group $\Diff_0$ is meager if and
only if the fragmentation norm is bounded.
\end{prop}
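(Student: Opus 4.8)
The equivalence splits into two implications, and essentially all the content is in one of them. For the ``only if'' direction there is nothing to prove: by Example~\ref{ex2-1} the fragmentation norm $frag$ is itself a conjugation-invariant norm on $\Diff_0(M)$, so if $\Diff_0(M)$ is meager then in particular $frag$ is bounded. So assume from now on that $frag\le N_0$ on all of $\Diff_0(M)$ and let $\nu$ be an arbitrary conjugation-invariant norm; I must show that $\nu$ is equivalent to the trivial norm, i.e.\ that it is both bounded and discrete. Discreteness is free: by Theorem~\ref{thm-1}(i) the group $\Diff_0(M)$ admits no fine conjugation-invariant norm, so $\nu$ is automatically discrete. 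Everything therefore reduces to boundedness of $\nu$.

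The plan for boundedness is to show that $\nu$ is bounded on the set $S$ of all diffeomorphisms supported in an embedded open ball, say $\nu|_{S}\le C$. Granting this, for any $f$ one writes $f=g_1\cdots g_{N_0}$ with each $g_i\in S$ (possible since $frag(f)\le N_0$, padding with the identity if necessary) and gets $\nu(f)\le\sum_i\nu(g_i)\le N_0C$; this is exactly the extremal property of the conjugation-generated norm $q_S=frag$ with respect to norms bounded on $S$. To bound $\nu$ on $S$ I would first normalize the support. If $g$ is supported in an embedded ball $U$, then $\supp(g)$ is a compact subset of $U\cong\RR^n$, so by the isotopy extension theorem there is a diffeomorphism of $U$ (extended by the identity to an element of $\Diff_0(M)$) carrying $\supp(g)$ into a prescribed, arbitrarily small ball; composing it with a diffeomorphism of $M$ carrying that small ball onto one fixed small ball $B_0$ (using that $\Diff_0(M)$ acts transitively on small embedded balls of $M$), one sees that $g$ is conjugate in $\Diff_0(M)$ to a diffeomorphism $h$ with $\supp(h)\subset B_0$. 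Since $\nu(g)=\nu(h)$, it suffices to bound $\nu$ on $\{h\in\Diff_0(M):\supp(h)\subset B_0\}$.

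For this last step I would run an Eilenberg--Mazur swindle. Fix, once and for all, a diffeomorphism $b\in\Diff_0(M)$ together with pairwise disjoint embedded balls $B_0,\ B_1=b(B_0),\ B_2=b(B_1),\dots$; for $h$ supported in $B_0$ the conjugates $b^i h b^{-i}$ are supported in the disjoint balls $B_i$, hence commute pairwise, and the infinite product $P=\prod_{i\ge 0}b^i h b^{-i}$ satisfies $bPb^{-1}=\prod_{i\ge 1}b^i h b^{-i}=h^{-1}P$. Rearranging, $h=PbP^{-1}b^{-1}=[P,b]$, whence $\nu(h)=\nu([P,b])\le\nu(PbP^{-1})+\nu(b^{-1})=2\nu(b)=:C$, a bound independent of $h$. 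This would finish the proof.

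The one point that genuinely needs care --- and the step I expect to be the main obstacle --- is that the infinite product $P$ really defines an element of $\Diff_0(M)$. When $M$ is closed the balls $B_i$ are forced to accumulate somewhere in $M$, and the naive composition need not be $C^\infty$ at the accumulation locus: conjugating $h$ by $b^i$ does not drive the higher derivatives of $b^i h b^{-i}$ to those of the identity, so a brute-force estimate fails. I would resolve this by choosing the configuration $(b,\{B_i\})$ to be self-similar, so that $b$ is an honest contraction toward the accumulation point in a suitable chart, and by first conjugating $h$ into a further shrunken, rescaled copy of itself, arranging that the tails of the product are flat to all orders at that point --- this is precisely the bookkeeping behind the ``portability'' of a Euclidean ball. (When $M$ is non-closed the difficulty evaporates, since one simply pushes the balls $B_i$ out to an end of $M$.) Once this smoothness issue is handled, the rest of the argument is formal.
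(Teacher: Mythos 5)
Your reduction is exactly the paper's: the ``only if'' direction is immediate since $frag$ is itself a conjugation-invariant norm, discreteness comes for free from Theorem~\ref{thm-1}(i), and the ``if'' direction boils down to bounding $\nu$ on $\Diff_0(B_0)$ for a fixed embedded ball $B_0$, because every factor in a fragmentation is conjugate into $\Diff_0(B_0)$ and $frag=q_{\Diff_0(B_0)}$. The paper handles that last step by invoking Theorem~\ref{thm-portable} for the portable manifold $B_0\cong\RR^n$. You instead try to reprove that step from scratch with an Eilenberg--Mazur swindle, and this is where there is a genuine gap.

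The infinite product $P=\prod_{i\geq 0}b^ihb^{-i}$ is not an element of $\Diff_0(M)$. If $M$ is closed, the balls $B_i$ accumulate at some point $p$, and with $b$ a self-similar contraction $x\mapsto\lambda x$ ($0<\lambda<1$) near $p$ one has $D(b^ihb^{-i})(x)=Dh(\lambda^{-i}x)$ on the support $B_i$; this stays a fixed distance from the identity as $i\to\infty$, so $DP$ does not extend continuously to $p$. Thus $P$ is a homeomorphism but not a $C^1$ diffeomorphism, regardless of self-similarity, and your suggested fix (``arranging that the tails of the product are flat to all orders'') is not something one can do by merely rescaling a fixed $h$: the only way to make the derivatives of $b^ihb^{-i}$ converge to the identity is to change $h$ along the way, which breaks the identity $h=[P,b]$. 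And if $M$ is open and you instead push the $B_i$ out to an end, $P$ fails to be compactly supported, so again $P\notin\Diff_0(M)$. This obstruction is precisely why the paper does \emph{not} run an infinite swindle. Instead, Theorem~\ref{thm-master} (with Lemmas~\ref{rearrange-id}--\ref{lem-two-Fcom}) is a ``finite swindle'': using only $m$ displaced copies of $H=\Diff_0(U)$, any $h\in H'$ with $cl_H(h)=m$ is written as a product of seven $F$-commutators, yielding $\nu(h)\leq 14\nu(\phi)$ uniformly in $m$, where $\phi$ is the single element from Lemma~\ref{lem-port-1} that $m$-displaces $\Diff_0(U)$ for all $m$ simultaneously. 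Since $\Diff_0(U)$ is perfect, every $h$ has some finite $cl_H(h)$, and the bound follows with no infinite product in sight. To repair your proof you should replace the swindle by an appeal to Theorem~\ref{thm-portable} (as the paper does), or reproduce the finite-displacement argument of Theorem~\ref{thm-master} directly.
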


\begin{problem}{\rm
Is the fragmentation norm is bounded for the case of closed
surfaces?}
\end{problem}

\medskip

Let us now turn to open manifolds.

\medskip
\noindent
\begin{defin}\label{def-portable}{\rm
We say that a smooth connected open manifold $M$ is {\it
portable}\footnote{This notion is a mock version of subcritical
Liouville manifolds in symplectic topology.} if it admits a complete
vector field $X$ and a compact subset $M_0$ with the following
properties:
\begin{itemize}
\item $M_0$ is an attractor of the flow $X^t$ generated by $X$:
for every compact subset $K \subset M$ there exists $\tau>0$ so that
$X^{\tau}(K) \subset M_0$.
\item There exists a diffeomorphism $\theta \in \Diff_0(M)$ so that
$\theta(M_0) \cap M_0 = \emptyset$.
\end{itemize}
The set $M_0$ is called {\it the core} of a portable manifold $M$. }
\end{defin}

\medskip
\noindent For instance, any manifold $M$ which splits as $P \times
\R^n$, where $P$ is a closed manifold, is portable. Indeed, the
vector field $X(p,z)=-z\frac{\partial}{\partial z}$ and the compact
$M_0= P \times \{|z|\leq 1\}$ satisfy the conditions above.
Furthermore, $M$ is portable if it admits an exhausting Morse
function with finite number of critical points so that  all the
indices are strictly less than $\frac{1}{2}\dim M$. This implies,
for example, that every 3-dimensional handlebody is a portable
manifold.

\medskip
\noindent The next result is the main "local" block in the proof of
Theorem \ref{thm-1}(ii) and (iii).

\begin{thm}\label{thm-portable} The group $\Diff_0(M)$ is meager provided $M$ is
portable.
\end{thm}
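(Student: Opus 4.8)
The plan is to show that any conjugation-invariant norm $\nu$ on $G := \Diff_0(M)$ is bounded (fineness is already excluded, for every connected $M$, by Theorem \ref{thm-1}(i), so ``meager'' reduces to ``bounded''), and by Proposition \ref{thm-frag} it suffices to bound $\nu$ on diffeomorphisms supported in a single embedded ball. So the real target is: there exists a constant $C$ so that $\nu(f)\le C$ for every $f\in G$ supported in an embedded open ball $B$. I would try to prove this by a ``displacement + infinite swindle'' argument built on the attractor structure of the portable manifold. The core $M_0$ is displaced by some $\theta\in G$, and the flow $X^t$ pushes every compact set into $M_0$; the strategy is to exploit these two facts to write an arbitrary ball-supported $f$ as a bounded-length product of conjugates of a single fixed element.

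The key steps, in order. First, reduce to the core: given $f$ supported in an embedded ball $B$, isotope $B$ into a neighborhood of $M_0$. Since $B$ is contained in some compact $K$, the time-$\tau$ map $g := X^\tau$ (note $X$ is complete, so $X^\tau\in\Diff(M)$, and after cutting off $X$ outside a large compact we may take $g\in\Diff_0(M)$) satisfies $g(B)\subset$ a small neighborhood $U$ of $M_0$. Then $g f g^{-1}$ is supported in $U$ and conjugate to $f$, so $\nu(f)=\nu(gfg^{-1})$; hence it is enough to bound $\nu$ on diffeomorphisms supported in a fixed neighborhood $U$ of $M_0$ that is displaced by $\theta$ (shrinking $U$ if necessary so that $\theta(U)\cap U=\emptyset$). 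Second, the standard displacement swindle: for $f$ supported in $U$ with $\theta(U)\cap U=\emptyset$, the commutator $[f,\theta]=f\cdot\theta f^{-1}\theta^{-1}$ has one factor supported in $U$ and one in $\theta(U)$, these commute, and one builds the classical infinite product $F = \prod_{k\ge 0}$ (suitably conjugated copies of $f$ pushed off to infinity by iterates of the contraction $g$); the pieces have disjoint supports accumulating only at $M_0$, so $F\in\Diff_0(M)$ is well-defined and smooth. The algebraic identity $F = f\cdot(gFg^{-1})$ up to conjugation then gives, via conjugation-invariance and subadditivity, $\nu(F)\le \nu(f)+\nu(F)$ — useless directly — so instead one runs the Eilenberg/Mazur swindle in the sharper form that yields $\nu(f)\le \nu(F\cdot(\text{conjugate of }F^{-1}))+\text{const}$, i.e. $\nu(f)$ is bounded by a universal constant independent of $f$. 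Concretely: pick $F$ as above so that $F$ is conjugate (by $g$-type maps and $\theta$) to $Ff^{-1}$ composed with itself appropriately, extracting $\nu(f)\le 2\nu(F)+2\nu(F^{-1})$ is the wrong direction, so the correct move is the ``commutator trick'': $f = F\,(gF^{-1}g^{-1})\cdot(\text{correction supported off }U)$, where the correction is itself a displaced piece absorbable into another swindle; iterating/telescoping shows $\nu(f)$ is controlled by a fixed number of conjugates of $\theta$ and of one auxiliary map. Third, assemble: combine the bound on $U$-supported maps with the fragmentation reduction (Proposition \ref{thm-frag}) and Theorem \ref{thm-1}(i) to conclude $G$ is meager.

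The main obstacle is making the infinite swindle honest in the smooth category on a general portable manifold: the infinitely many conjugated copies of $f$ must have supports that are pairwise disjoint, shrink, and accumulate exactly on the compact core $M_0$ so that the infinite product is not merely a bijection but a genuine element of $\Diff_0(M)$ (smooth, compactly supported, isotopic to the identity). This requires choosing the contraction time-map $g$ and the displacing $\theta$ compatibly — e.g. arranging $g^k(U)$ to be a nested sequence of neighborhoods of $M_0$ with $\bigcap_k g^k(U)$ having empty interior, or more safely working inside a displaceable tube and using $\theta$-translates rather than $g$-iterates to scatter the copies — and then verifying $C^\infty$ convergence near $M_0$. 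Once the infinite product is legitimately in $\Diff_0(M)$, the rest is formal manipulation with the norm axioms (i)--(iv): every step either replaces an element by a conjugate (norm unchanged) or splits a product (norm subadditive), and the telescoping of the swindle cancels $\nu(F)$ against itself, leaving $\nu(f)$ bounded by a constant depending only on $\nu(\theta)$ and $\nu(g)$ for the fixed maps $\theta,g$ — hence a universal bound, proving boundedness and therefore meagerness.
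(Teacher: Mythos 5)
Your reduction to the core is essentially the same as the paper's: both proofs conjugate an arbitrary $f\in\Diff_0(M)$ into $\Diff_0(U)$ for a small neighborhood $U$ of $M_0$ using the attracting flow, and both produce a $\phi\in\Diff_0(M)$ whose iterates $\phi^i(U)$ are pairwise disjoint (this is Lemma~\ref{lem-port-1}). The detour through Proposition~\ref{thm-frag} is unnecessary (and, for the ball, circular --- the ``if'' direction of that proposition is itself deduced from the ball case of Theorem~\ref{thm-portable}); the direct conjugation $\psi f\psi^{-1}\in\Diff_0(U)$ already handles every $f$, with no need for fragmentation.

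The genuine gap is what happens after the reduction. You propose an infinite Eilenberg--Mazur swindle, building $F=\prod_{k\ge 0}(\text{conjugated copies of }f)$ with supports accumulating at $M_0$. As you yourself flag, this is not legitimate in the smooth category: the supports are pushed toward the compact set $M_0$, not off to an end, and the infinite composition has no reason to converge in $C^\infty$ near $M_0$ --- one generically obtains a compactly supported homeomorphism that fails to be differentiable on $M_0$. You acknowledge this obstacle but do not resolve it, and the telescoping identity you sketch (``$\nu(F)\le\nu(f)+\nu(F)$ --- useless directly'', then a vague ``correction absorbable into another swindle'') does not close. The paper avoids the swindle entirely and replaces it with a \emph{finite} displacement argument: since $\phi$ $m$-displaces $H:=\Diff_0(U)$ for \emph{every} $m$, and $H$ is perfect, Theorem~\ref{thm-master}(i) (proved via Lemmas~\ref{rearrange-id}--\ref{lem-two-Fcom}: every $h\in H'$ with $cl_H(h)=m$ is a product of seven $F$-commutators, regardless of $m$) yields the uniform bound $\nu(h)\le 14\,\nu(\phi)$ for all $h\in\Diff_0(U)$. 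No infinite products appear; only finitely many disjoint translates $\phi(U),\dots,\phi^m(U)$ are used for each fixed $h$, and the constant $14$ is independent of $m$. That algebraic rearrangement lemma is precisely the idea missing from your outline: it is the finitary substitute for the swindle, and without it the argument does not go through in $\Diff_0(M)$.
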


\medskip
\noindent For instance, any norm on $\Diff_0$ of an open ball is
bounded. This immediately yields Proposition \ref{thm-frag}.
Furthermore, $\Diff_0$ of a 2-dimensional annulus is meager (as well
as for any product $\R\times M$). However, it is still unknown
whether the same holds for the open M\"{o}bius band!

\medskip
\noindent Our next result deals with the commutator length diameter
of a portable manifold.

\begin{thm}\label{thm-uni-perf-port}
For a portable manifold $M$, $cld(M) \leq 2$.
\end{thm}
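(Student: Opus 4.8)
The plan is to show that for a portable manifold $M$, every $f \in \Diff_0(M)$ is a product of at most two commutators. The basic idea is the classical "infinite swindle" (Mather–Thurston trick), adapted to the portable setting: the attracting vector field $X$ lets us compress any given diffeomorphism into a shrinking sequence of disjoint regions, and the displacing diffeomorphism $\theta$ lets us separate the two halves of an Eilenberg-style telescoping sum so that each half assembles into a single conjugation product, hence into a commutator.

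First I would set up the compression. Given $f \in \Diff_0(M)$ with support in some compact $K$, choose $\tau$ with $X^{\tau}(K) \subset M_0$, and let $g = X^{\tau}$ (extended/cut off to a genuine element of $\Diff_0(M)$ if needed, using that $X$ is complete and $M_0$ is an attractor). Write $f_1 = g f g^{-1}$, supported in $M_0$. Now iterate $g$: the regions $g^k(M_0)$, $k \geq 1$, shrink toward $M_0$, but more importantly, because $M_0$ is an \emph{attractor}, after composing with a fixed displacement one can arrange a family of pairwise disjoint copies $M_0 = N_0, N_1, N_2, \dots$ with $N_{k}$ obtained from $N_{k-1}$ by a diffeomorphism, and with the "limit" behaving well (the tails converge to a point or to $\partial M_0$-type set, so an infinite product of the shrunk copies is still smooth). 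On each $N_k$ place a conjugated copy $f_1^{(k)}$ of $f_1$; let $F = \prod_{k \geq 0} f_1^{(k)}$, a well-defined element of $\Diff_0(M)$ since the supports are disjoint and shrinking. Then there is a single $\psi \in \Diff_0(M)$ (the "shift" sending $N_k \to N_{k+1}$ compatibly, which exists because each step is a diffeomorphism and the infinite composite is controlled) with $\psi F \psi^{-1} = \prod_{k \geq 1} f_1^{(k)}$, so that $f_1 = f_1^{(0)} = F \cdot (\psi F \psi^{-1})^{-1} = F \psi F^{-1} \psi^{-1} = [F, \psi]$. This exhibits $f_1$, and hence $f = g^{-1} f_1 g$ (a conjugate of $f_1$, hence itself a single commutator since the commutator length is conjugation-invariant), as one commutator — wait, here is where the displacing diffeomorphism $\theta$ actually enters: to run the swindle one needs the copies $N_k$ to be simultaneously disjoint from the region where $\psi$ must also act, and $\theta(M_0) \cap M_0 = \emptyset$ guarantees enough room to fit the whole telescoping array plus its shift inside $M$.

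The hard part will be the \emph{smoothness of the infinite product} and of the shift $\psi$: one must choose the conjugating diffeomorphisms producing $N_k$ from $N_0$ so that the $C^\infty$ sizes of the conjugated copies $f_1^{(k)}$ tend to the identity in every derivative fast enough that $F = \prod f_1^{(k)}$ and $\psi$ are genuinely smooth at the accumulation locus. This is exactly the technical content of the portable hypothesis — the attractor condition, via the flow $X^t$, provides contractions that can be iterated with geometric control, so I would spend the bulk of the argument verifying that $g^k$ (or suitable modifications) contracts a neighbourhood of $M_0$ with uniformly bounded distortion, making the telescope converge. Once $f$ is written as one commutator this already gives $cld(M) \le 1$ on each such $f$; the bound $cld(M) \le 2$ in the statement is then the safe claim accounting for the cut-off step (replacing the non-compactly-supported flow $X^t$ by an element of $\Diff_0(M)$ may cost an extra commutator, or one may prefer to split $f$ into a piece supported near $M_0$ and a piece pushed to $M_0$ by $\theta$, handling each by one commutator). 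I would present the clean "$\le 2$" version and remark that with more care the core contributes only one commutator.
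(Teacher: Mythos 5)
Your approach is genuinely different from the paper's, and it has a real gap that the paper's argument is specifically designed to avoid. You propose the Mather swindle: compress $f$ into a conjugate $f_1$ supported near the core, place a shrinking sequence of conjugated copies $f_1^{(k)}$ on pairwise disjoint sets $N_k$, form the infinite product $F=\prod_k f_1^{(k)}$ and the shift $\psi$, and conclude $f_1=[F,\psi]$. You correctly flag the sticking point yourself: the smoothness of $F$ and $\psi$ at the accumulation locus. But this is not a technicality to be disposed of by ``geometric control from the flow''; in the generality of Definition~\ref{def-portable} it is in fact an obstruction. The core $M_0$ is an arbitrary compact attractor --- for $M=P\times\R^n$ with $P$ a closed manifold it is $P\times\{|z|\le1\}$, a set with nontrivial topology. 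The pairwise disjoint sets $\phi^i(U)$ supplied by Lemma~\ref{lem-port-1} are diffeomorphic copies of a neighbourhood of $M_0$; they cannot shrink to a point through smooth embeddings, and the portability axioms give only topological/dynamical information about $X^t$, not any $C^\infty$ contraction estimates. Nothing in the hypotheses lets you conclude that the $f_1^{(k)}$ converge to the identity in $C^\infty$, so the infinite product $F$ need not be a diffeomorphism at all. Absent that, the identity $f_1=[F,\psi]$ is formal and the argument collapses.

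The paper sidesteps all of this with a \emph{finite} algebraic rearrangement. Lemma~\ref{lem-port-1} produces $\phi$ such that the subgroups $\Conj_{\phi^i}(\Diff_0(U))$ are pairwise commuting for all $i$, i.e.\ $\phi$ $m$-displaces $H:=\Diff_0(U)$ for every $m$. Then Corollary~\ref{cor-port}, which is the inequality $cl_G(x)\le 2$ from Theorem~\ref{thm-master}(i), applies: the proof of that theorem shows, purely by manipulating a finite product using the pairwise-commuting conjugates (Lemmas~\ref{rearrange-id}--\ref{lem-two-Fcom}), that any $h\in H'$ with $cl_H(h)=m$ factors as an $F$-commutator (which is a conjugate of a genuine commutator) times a single commutator $[\tilde\phi,\tilde\psi]$. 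No limit, no infinite product, no convergence estimate. Finally every $f\in\Diff_0(M)$ is conjugate into $\Diff_0(U)$ by the attractor property and the isotopy extension theorem, giving $cld(M)\le2$. If you want to pursue the swindle route you would need to add to the definition of portability a hypothesis strong enough to force $C^\infty$-convergence of the shrinking copies; without that, the algebraic route of Section~\ref{sec-ineq-com} is not optional.
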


\subsubsection{Volume-preserving and symplectic
diffeomorphisms: examples and problems }

\medskip
\noindent In contrast to groups $\Diff_0$, the identity components
of groups of compactly supported volume preserving and symplectic
diffeomorphisms, as well as their commutator subgroups, are {\bf
never} meager: they admit a fine norm.

\begin{example}\label{ex-vol-support} {\rm {\bf The size-of-support norm:}
The counterpart of Example \ref{ex-discr-volsup} above for
diffeomorphism groups is as follows. Consider the identity component
$\Diff_0(M,\text{vol})$ of the group of compactly supported
volume-preserving diffeomorphisms of a smooth manifold $M$ of
dimension $> 0$. Define a norm of a diffeomorphism as the volume of
its support. This norm is necessarily fine, and it is unbounded
whenever the volume of $M$ is infinite. However this norm is never
stably unbounded: in fact, it is bounded on all cyclic subgroups.}
\end{example}

\medskip
\noindent In some situations, stably unbounded norms on the
commutator subgroup of $\Diff_0(M,\text{vol})$ can be "induced" from
the fundamental group of $M$ even when the volume of $M$ is finite:

\begin{example}\label{ex-sb-4}{\rm Suppose that
$M$ is a closed manifold equipped with a volume form.  Suppose that
$H:=\pi_1(M)$ has trivial center. Then the commutator length on the
commutator subgroup of $\Diff_0(M,\text{vol})$ is stably unbounded
provided the commutator length on $H'$ is stably unbounded, see
\cite{Ghys-Gambaudo,PMontreal}.}\end{example}

\medskip
\noindent However, no unbounded norms on volume-preserving
diffeomorphisms are known so far in the cases when the manifold has
simple topology and finite volume.

\begin{problem} \label{quest-volum} {\rm Assume that $n \geq 3$.
Does the identity component of the group of volume preserving
diffeomorphisms of the sphere $S^n$ admit an unbounded
conjugation-invariant norm? Does the identity component of the group
of compactly supported volume preserving diffeomorphisms of the ball
of finite volume admit an unbounded conjugation-invariant norm?}
\end{problem}

\medskip
\noindent In the symplectic category, interesting norms inhabit the
group $\Ham(M,\omega)$ of compactly supported Hamiltonian
diffeomorphisms of a symplectic manifold $(M,\omega)$.

\begin{example}{\rm {\bf The Hofer norm} on  $\Ham(M,\omega)$ (see e.g. \cite{Pbook})
is fine. Its unboundedness is a long-standing conjecture in
symplectic topology. Nowadays it is confirmed for various symplectic
manifolds including for instance surfaces, complex projective spaces
with the Fubini-Studi symplectic form and closed manifolds with
$\pi_2=0$. Further, the Hofer norm on groups of Hamiltonian
diffeomorphisms is known to be stably unbounded for various closed
symplectic manifolds. However it is unbounded, but not stably
unbounded, for the standard symplectic vector space $\R^{2n}$
(Sikorav, \cite{Sikorav}).}
\end{example}

\begin{example}\label{ex-sb-5}{\rm  The commutator length on $\Ham(M,\omega)$ is
known to be stably unbounded for various closed symplectic manifolds
(see \cite{Barge-Ghys, Entov, Entov-Polterovich, Ghys-Gambaudo,
Py-1,Py-2}) including all surfaces and complex projective spaces of
arbitrary dimension.}
\end{example}

\begin{example}
\label{compsupportsympl} {\rm The group $\Ham(\R^{2n})$  admits the
Calabi homomorphism (the average Hamiltonian) to $\R$. The kernel of
the Calabi homomorphism coincides with the commutator subgroup of
$\Ham(\R^{2n})$, which is known to be simple \cite{Banyaga}. This
group is stably bounded with respect to the commutator length. This
is proved by D.~Kotschick in \cite{Kotschick}. Alternatively, this
readily follows from the algebraic packing inequality given by
Theorem \ref{thm-algpack} below. In contrast to this, the commutator
length on $[\Ham(B^{2n}),\Ham(B^{2n})]$, where $B^{2n}$ is the
standard symplectic ball, is stably unbounded, see \cite{BEP}. }
\end{example}

\medskip
\noindent\begin{example}{\rm A somewhat less understood example is
{\it the fragmentation norm} (cf. Example \ref{ex2-1} above). Let
$(M,\omega)$ be a closed symplectic manifold and let $U \subset M$.
The Hamiltonian fragmentation lemma (see \cite{Banyaga}) states that
every Hamiltonian diffeomorphism $f$ can be written as a product
$h_1\circ...\circ h_N$, where each $h_i$ is conjugate to an element
from $\Ham(U)$. Define the fragmentation norm $frag_U(f)$ as the
minimal number of factors in such a decomposition. Using methods of
\cite{EP-qst}, one can show that $frag_U$ is unbounded on
$\Ham(\T^{2})$ provided the subset $U$ is displaceable by a
Hamiltonian diffeomorphism (e.g. $U$ is a ball of a small diameter).
Indeed combining Theorem 7.1 in \cite{EP-qst} with the fact that the
group $\Ham(\T^{2})$ is simply connected  one gets that
\begin{equation}\label{eq-Floer}
|\mu(\phi\psi)-\mu(\phi)-\mu(\psi)| \leq \min
(frag_U(\phi),frag_U(\psi)) \end{equation} for all $\phi,\psi \in
\Ham(\T^{2})$, where $\mu$ is the appropriate asymptotic spectral
invariant (we refer to \cite{Schwarz, Oh-spectral, MS2} for
preliminaries on spectral invariants). Take a pair of disjoint
meridians $L$ and $K$ on the torus. Let $\Phi,\Psi$ be two smooth
cut off functions on the torus with disjoint supports which equal
$1$ near $L$ and $K$ respectively. Let $\{\phi_t\}$ and $\{\psi_t\}$
be the Hamiltonian flows generated by $\Phi$ and $\Psi$. A standard
calculation in Floer homology shows that the left hand side of
\eqref{eq-Floer} with $\phi=\phi_t,\psi=\psi_t$ goes to infinity as
$t \to \infty$. This proves unboundedness of the Hamiltonian
fragmentation norm $frag_U$ on for the 2-torus. For
higher-dimensional tori, as it was pointed out to us by D.~McDuff,
spectral invariants are still well defined on $\Ham$ due to a result
by M.~Schwarz \cite{Schwarz}, and thus the argument above goes
through. However the question on unboundedness of the fragmentation
norm is still open, for instance, for the complex projective spaces
in any dimension. }\end{example}

\bigskip
\noindent{\sc Organization of the paper:} In the next section we
introduce algebraic packing and displacement technique which is used
for the proof of the main results stated in the introduction. As an
illustration, we deduce there Theorem \ref{thm-1}(i) and Proposition
\ref{thm-gbar}.  Theorems \ref{thm-portable} and
\ref{thm-uni-perf-port} are proved in Section \ref{subsec-to-port}.
These  theorems, combined with topological decomposition technique
(which is standard in the case of spheres, and less trivial in the
case of three-manifolds) is applied to the proof of Theorems
\ref{thm-1}(ii),\ref{thm-uni-perf}(i) in Section \ref{subsec-to-sph}
and of Theorems \ref{thm-1}(iii),\ref{thm-uni-perf}(ii) in Section
\ref{subsec-to-threeman}.

\section{Algebraic tools: packing and displacement}

Here we present the algebraic tools used for proving Theorems
\ref{thm-1}(i), \ref{thm-portable} and \ref{thm-uni-perf-port}. We
use a number of tricks which imitate displacement of supports of
diffeomorphisms and decomposition of diffeomorphisms into products
of commutators in a more general algebraic setting. The tricks of
this nature appear in the context of transformation groups at least
since the beginning of 1960-ies (see e.g. \cite{A}). The system of
notions introduced below in parts imitates and extends the one
arising in the study  of Hofer's geometry on the group of
Hamiltonian diffeomorphisms. Note also that various interesting
results on infinitely displaceable subgroups  were obtained in a
recent work of D.~Kotschick \cite{Kotschick}.

\subsection{Algebraic packing and displacement energy}

Let $G$ be any group. We say that two subgroups $H_1,H_2 \subset
G$ commute if $h_1h_2=h_2h_1$ for all $h_1 \in H_1, h_2 \in H_2$.
We denote by $Conj_\phi$ the automorhism of $G$ given by $g
\mapsto \phi g\phi^{-1}$. A subgroup $H \subset G$ is called
$m$-{\it displaceable} (where $m \geq 1$ is an integer) if there
exist elements $\phi_0:=1,\phi_1,...,\phi_{m} \in G$ so that the
subgroups $\Conj_{\phi_i}(H),\Conj_{\phi_j}(H) $ pair-wise commute
for all distinct $i,j \in \{0;...;m\}$. A subgroup $H$ is called
{\it strongly} $m$-displaceable if in the previous definition one
can choose $\phi_k$'s to be consecutive powers of the same element
$\phi \in G$: $\phi_k = \phi^k$. In this case we shall say that
$\phi$ $m$-displaces $H$.

Note that for $m=1$ both notions coincide, and, for brevity, we
refer to a $1$-displaceable subgroup as to displaceable.

Introduce two numerical invariants related to the above notions.
The {\it algebraic packing number} $p(G,H)=m+1$, where $m$ is the
maximal integer such that $H$ is $m$-displaceable. This is a
purely algebraic invariant. The second quantity involves a
conjugation-invariant norm, say $\nu$ on $G$. Define the {\it order $m$
displacement energy} of $H$ with respect to $\nu$ as $e_m(H) =
\inf \nu(\phi)$ where the infimum is taken over all $\phi \in G$
which $m$-displace $H$. We put $e_m(H)=+\infty$ if $H$ is not
strongly $m$-displaceable.

While speaking on displaceability, we tacitly assume that the
subgroup $H$ is non-abelian. Indeed, every abelian subgroup $H$ is
$m$-displaceable by $1$ for every $m \in \N$ and hence $e_m(H)=0$.

\medskip
\noindent
\begin{example}\label{ex-disp-diffeo}{\rm Let $M$ be a smooth connected
manifold. Put $G = \Diff_0(M)$. Take any open ball $B \subset M$.
Let $H$ be the subgroup of $G$ consisting of all diffeomorphisms
supported in $B$. Choose any diffeomorphism $\phi \in \Diff_0(M)$
which displaces $B$: $B \cap \phi(B) = \emptyset$. Then $H$
commutes with $Conj_{\phi}(H)$, so $H$ is displaceable. }
\end{example}

\medskip
\noindent
\begin{thm}\label{thm-master} Let $H \subset G$ be a strongly
$m$-displaceable subgroup of $G$. Assume that $G$ is endowed with
a conjugation-invariant norm $\nu$.
\begin{itemize}
\item[{(i)}] For every element $x \in H'$ with $cl_H(x)=m$ the following
inequalities hold:
\begin{equation}\label{eq-master}
\nu(x) \leq 14 e_m(H)\;
\end{equation}
and
\begin{equation}\label{eq-master-com}
cl_G(x) \leq 2;
\end{equation}
\item[{(ii)}] In the case $cl_H(x)=1$, that is $x=[f,g]$ for some $f,g \in H$, we have that
\begin{equation}\label{eq-master-1}
\nu(x) \leq 4e_1(H)\;;
\end{equation}
\end{itemize}
\end{thm}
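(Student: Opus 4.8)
The plan is to prove Theorem \ref{thm-master} by exhibiting explicit algebraic identities that mirror the geometric ``displace the support, then write a diffeomorphism as a product of commutators'' argument. Throughout we fix $\phi\in G$ which $m$-displaces $H$, so that $\Conj_{\phi^i}(H)$ and $\Conj_{\phi^j}(H)$ commute whenever $0\le i\ne j\le m$, and we fix $x\in H'$ with $cl_H(x)=m$, say $x=[a_1,b_1]\cdots[a_m,b_m]$ with $a_k,b_k\in H$. The first and easiest step is part (i), equation \eqref{eq-master-com}: I would show $cl_G(x)\le 2$. The key observation is that the conjugates $x_k:=\Conj_{\phi^{k-1}}([a_k,b_k])$, $k=1,\dots,m$, lie in pairwise commuting subgroups, so the product $X:=x_1x_2\cdots x_m$ is itself a single commutator in $G$ --- namely $X=[\prod_k \Conj_{\phi^{k-1}}(a_k),\prod_k \Conj_{\phi^{k-1}}(b_k)]$ --- because in a product of commuting groups the commutator of products is the product of commutators. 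Then $x$ differs from $X$ by the conjugating elements $\phi^{k-1}$; one rewrites $x = \big(\prod [a_k,b_k]\big)$ versus $X=\prod \phi^{k-1}[a_k,b_k]\phi^{-(k-1)}$ and checks that $x X^{-1}$ (or a suitable rearrangement) is again a single commutator, using an identity of the form $u\,(\psi v\psi^{-1})\,u^{-1}(\psi u \psi^{-1})\cdots$; more robustly, the standard trick is that $x$ itself equals a product of two commutators in $G$ once we are allowed to conjugate by powers of $\phi$, via the ``infinite swindle''-type telescoping $x=[\alpha,\phi][\beta,\phi^{-1}]\cdots$ --- I would locate the precise two-commutator identity and verify it by a direct expansion.

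Next I would prove part (ii), inequality \eqref{eq-master-1}, which is the cleanest of the norm estimates and sets the pattern. Here $x=[f,g]$ with $f,g\in H$ and $\phi$ displaces $H$ (the case $m=1$). The classical Banyaga-style identity is
\begin{equation}
[f,g] = f\,(\phi g^{-1}\phi^{-1})\,f^{-1}\,(\phi g\phi^{-1})\cdot \big(\text{commutator of }g\text{-type terms}\big),
\end{equation}
more precisely one uses that $f$ commutes with $\phi f\phi^{-1}$-type elements because $H$ and $\Conj_\phi(H)$ commute. The standard computation gives $[f,g] = [\,f\,,\,g\,]$ rewritten as a product of the form $\big([\phi,g]\text{-conjugates}\big)$: writing $g'=\phi^{-1}g\phi$ one has $[f,g]=[f,g (g')^{-1}]\cdot [f,g']$ and since $f$ commutes with $g'$ the second factor is trivial, so $[f,g]=[f,\,g\,\phi^{-1}g^{-1}\phi] = [f,\,[g,\phi^{-1}]^{-1}\cdot\text{stuff}]$ --- the upshot being $[f,g]$ is conjugate to an expression involving at most four factors each of which is $\phi$, $\phi^{-1}$, or a conjugate thereof, whence by the triangle inequality and conjugation-invariance of $\nu$, $\nu([f,g])\le 4\nu(\phi)$; taking the infimum over displacing $\phi$ gives $\nu(x)\le 4e_1(H)$. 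I would write out the four-term identity carefully since the constant $4$ depends on getting exactly the right grouping.

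Finally, for \eqref{eq-master} with general $m$ I would bootstrap from part (ii). Given $x=\prod_{k=1}^m[a_k,b_k]$ and $\phi$ that $m$-displaces $H$: set $c_k=\Conj_{\phi^{k}}$ and note $\Conj_{\phi^{k}}([a_k,b_k])$ for $k=1,\dots,m$ lie in pairwise commuting subgroups. The strategy is: (a) move each factor $[a_k,b_k]$ to ``slot $k$'' by conjugating with $\phi^{k}$, at a cost controlled by $\nu(\phi^{k})\le k\,\nu(\phi)\le m\,\nu(\phi)$ --- but one must be careful, naive conjugation of a product costs too much, so instead one uses a telescoping identity that expresses $x$ as a bounded product (independent of $m$!) of commutators and conjugates of $\phi^{\pm1}$, exactly as in the $m=1$ case but now exploiting that the $m$ pieces live in commuting slots so they can be assembled into $O(1)$ many commutators rather than $m$. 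This is where the factor $14$ comes from: roughly, $2$ commutators (from the \eqref{eq-master-com} analysis) each of which, by the \eqref{eq-master-1} analysis applied in $G$ rather than $H$, costs $\le 4e_m$, plus a few extra $\phi^{m}$-conjugation correction terms costing $e_m$ each, adding up to $14 e_m(H)$. \textbf{The main obstacle} I anticipate is precisely bookkeeping this last step: finding the explicit finite word expressing $x$ in terms of commutators and $\phi$-conjugates whose length does \emph{not} grow with $m$, and then tracking the constants so that they sum to exactly $14$ (rather than some larger constant) --- this requires choosing the grouping in the telescoping identities optimally and using conjugation-invariance of $\nu$ aggressively to absorb conjugating factors. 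The commuting-subgroups hypothesis is what makes a bound independent of $m$ possible at all, and the whole proof hinges on using it to collapse an $m$-fold product of commutators into a bounded-length expression.
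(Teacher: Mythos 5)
Your outline correctly identifies the governing structure — the commuting ``slots'' $\Conj_{\phi^k}(H)$, the fact that a product of commutators placed in distinct slots is a single commutator, and the $m{=}1$ displacement trick — and your argument for part (ii) lands in the right place (though the line $[f,g]=[f,g(g')^{-1}]\cdot[f,g']$ is not a valid identity; the correct distributivity is $[f,ab]=[f,a]\cdot\Conj_a[f,b]$, which here still collapses because $[f,g']=1$, giving $[f,g]=[f,[g,\phi^{-1}]]$ and hence $\nu\le 4\nu(\phi)$). The $cl_G(x)\le 2$ claim is also on track: the paper's version is that $x=\prod_m^1[f_i,g_i]$ equals a single $F$-commutator times $\theta:=\prod_1^m\Conj_{F^i}([f_i,g_i])$, and $\theta=[\phi,\psi]$ with $\phi=\prod\Conj_{F^i}(f_i)$, $\psi=\prod\Conj_{F^i}(g_i)$.

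Where the proposal has a genuine gap is in \eqref{eq-master}. The accounting you suggest — ``two commutators from the $cl_G\le 2$ decomposition, each costing $\le 4e_m$ by (ii), plus correction terms'' — does not work, because the two commutators produced by the $cl_G\le 2$ decomposition are $[\phi,\psi]$ with $\phi,\psi$ long products of conjugates of elements of $H$; these are not elements on which any displacement bound applies, and part (ii) gives you nothing about them. The actual mechanism in the paper is entirely different in its bookkeeping: it introduces \emph{$F$-commutators} $g=\Conj_f[F,h]$ (which always satisfy $\nu(g)\le 2\nu(F)$), and proves three explicit rearrangement identities (Lemmas \ref{rearrange-id}, \ref{rearrange}, \ref{lem-two-Fcom}) showing: a slotted product $\prod_0^m\Conj_{F^i}(g_i)$ with $\prod g_i=1$ is a single $F$-commutator; $\prod_m^1 g_i$ equals an $F$-commutator times $\prod_1^m\Conj_{F^i}(g_i)$; and any commutator in $H$ is a product of two $F$-commutators. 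Chaining these, one writes $x$ as an $F$-commutator times $\theta=[\phi,\psi]$, then writes $\phi=fx'$, $\psi=gy'$ with $f,g\in H$ and $x',y'$ $F$-commutators, expands $[fx',gy']$, and replaces $[f,g]$ by two $F$-commutators; the result is seven $F$-commutators, hence $\nu(x)\le 7\cdot 2\nu(F)=14\nu(F)$. The count $7\times 2=14$, not $2\times 4 +$ corrections, is the true source of the constant, and producing the explicit telescoping identities (especially the one behind Lemma \ref{rearrange-id}, where one solves a triangular system $\phi_k=\prod_0^k g_i$) is the actual content of the proof; your proposal flags this as ``the main obstacle'' but does not resolve it.
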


\medskip
\noindent
\begin{cor}\label{cor-port}
Assume that an element $F \in G$ $m$-displaces $H$ for {\it every}
$m \geq 1$. Then  $cl_G(h) \leq 2$  for all $h \in H'$.
\end{cor}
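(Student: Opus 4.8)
The plan is to deduce this immediately from Theorem~\ref{thm-master}(i). First I would fix an element $h \in H'$ and observe that, by the very definition of the commutator subgroup, $h$ is a product of finitely many commutators of elements of $H$; hence the commutator length $cl_H(h)$ is a well-defined nonnegative integer, and I set $m := cl_H(h)$.

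Next: if $m = 0$ then $h = 1$ and $cl_G(h) = 0 \le 2$, so there is nothing to prove, and we may assume $m \ge 1$. I would then note that the hypothesis that $F$ $m$-displaces $H$ is precisely the statement that $\phi := F$ realizes strong $m$-displaceability of $H$, i.e.\ the conjugates $\Conj_{\phi^k}(H)$ for $k = 0, \dots, m$ pairwise commute. Thus $H$ is strongly $m$-displaceable and $h$ is an element of $H'$ with $cl_H(h) = m$, so Theorem~\ref{thm-master}(i) applies; inequality~\eqref{eq-master-com} then yields $cl_G(h) \le 2$, as required. (If $H$ happens to be abelian the assertion is vacuous, since then $H' = \{1\}$.)

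I do not expect any genuine obstacle in the corollary itself: all of the substance sits in Theorem~\ref{thm-master}(i), whose key feature is exactly that the bound~\eqref{eq-master-com} on $cl_G(x)$ is the absolute constant $2$, with no dependence on $m = cl_H(x)$. The only things one must be slightly careful about here are that the \emph{single} element $F$ serves for the particular value $m = cl_H(h)$ attached to the given $h$ — which is guaranteed precisely because $F$ is assumed to $m$-displace $H$ for every $m \ge 1$ — and that $cl_H$ takes finite values on all of $H'$, which is immediate from the definition of the commutator subgroup.
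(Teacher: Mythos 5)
Your proof is correct and follows exactly the same route as the paper: the paper dismisses the corollary as an immediate consequence of inequality~\eqref{eq-master-com}, and your write-up merely makes explicit the (correct) bookkeeping — take $m := cl_H(h)$, note $F$ in particular $m$-displaces $H$ for that $m$, and invoke Theorem~\ref{thm-master}(i). No discrepancy or gap.
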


\medskip
\noindent This follows immediately from inequality
\eqref{eq-master-com}.

\medskip
\noindent Theorem \ref{thm-master}(ii) is proved in \cite{EP}. The
argument is very short: indeed, assume that $ \Conj_{\phi}(H)$
commutes with $H$. Then $[f,g] = [f \cdot \phi f^{-1}
\phi^{-1},g]$. Using bi-invariance of $\nu$ we get that
$$\nu([f,g]) \leq 2\nu([f,\phi]) \leq 4\nu(\phi)\;.$$
Taking the infimum over all $\phi$ displacing $H$ we get inequality
\eqref{eq-master-1}. The proof of Theorem \ref{thm-master}(i) is
more involved, see Section \ref{sec-ineq-com}  below.

\medskip
\noindent \begin{rem}{\rm For each pair of subgroups $H_1,H_2
\subset G$ one can define the "disjunction energy" $e(H_1,H_2)$ as
the infimum of $\nu(\phi)$ where $H_1$ commutes with
$\Conj_{\phi}(H_2)$. The argument above shows that
$\nu([h_1,h_2])\leq 4e(H_1,H_2)$.}
\end{rem}

\medskip
Let us give some sample applications of Theorem \ref{thm-master}.
First, we deduce from inequality \eqref{eq-master-1} the fact that
the group $\Diff_0(M)$ does not admit a fine norm.

\medskip
\noindent{\bf Proof of Theorem \ref{thm-1}(i):} Assume on the
contrary that $\Diff_0(M)$ admits a fine norm, say $\nu$. Take any
ball $B \subset M$ and pick two non-commuting diffeomorphisms $f$
and $g$ supported in $B$. For any $\epsilon >0$ take $h \in
\Diff_0(M)$ with $0< \nu(h) < \epsilon$. Note that since $h \neq
\done$ there exists a ball $C \subset M$ so that $h$ displaces
$C$. Since all balls in $M$ are isotopic, there is a
diffeomorphism $\psi \in \Diff_0(M)$ with $\psi(C)=B$. Therefore
$\phi:=\psi h \psi^{-1}$ displaces $B$, and hence $\phi$ displaces
the subgroup $\Diff_0(B) \subset \Diff_0(M)$. Applying inequality
\eqref{eq-master-1} we get that
$$\nu([f,g]) \leq 4\nu(\phi) = 4\nu(h) < 4\epsilon\;.$$
Sending $\epsilon$ to zero, we conclude that $\nu([f,g])=0$, a
contradiction with the non-degeneracy of a norm. \qed

\medskip

Next, we apply Theorem \ref{thm-master} to proving that for a class
of groups introduced in Section \ref{subsec-quasim} existence of
stably unbounded norms yields existence of quasi-morphisms.

\medskip
\noindent{\bf Proof of Propositions \ref{prop-gbar} and
\ref{thm-gbar}:}
 First of all note that every element $h \in \bar{G}$ can be
uniquely written in the following {\it normal form}: either $h
=(g_1,g_2)$ or $h=(g_1,g_2)t$. This readily yields Proposition
\ref{prop-gbar}. Second, we claim that it suffices to show that $G$
has a non-trivial homogeneous quasi-morphism, say $r$. Indeed, put
$\bar{r}(h) = r(g_1)+r(g_2)$, where $h$ is in the normal form as
above. A straightforward analysis shows that $\bar{r}$ is a (not
necessarily homogeneous!) quasi-morphism on $\bar{G}$. For instance,
if $h=(h_1,h_2)t$ and $f=(f_1,f_2)$ then $hf=(h_1f_2,h_2f_1)t$ and
hence
$$|\bar{r}(hf) -\bar{r}(h)-\bar{r}(f)| \leq
|r(h_1f_2)-r(h_1)-r(f_2)|+|r(h_2f_1)-r(h_2)-r(f_1)|$$ and hence is
uniformly bounded. The other cases are considered similarly. Finally
note that the stabilization $\bar{r}_{\infty}(h):=\lim_{n \to
\infty}\bar{r}(h^n)/n$ does not vanish on $h=(g,1)$ provided
$r(g)\neq 0$. Since $\bar{r}_{\infty}$ is a homogeneous
quasi-morphism, the claim follows.

Let $\nu$ be a stably unbounded norm on $\bar{G}$. Assume that
$\nu_{\infty}(w) >0$ for some $w \in \bar{G}$.

\medskip
\noindent{\sc Case 1:} $w=(g_1,g_2)$. Put $w_1 = (g_1,1)$ and
$w_2=(1,g_2)$. We claim that either $\nu_{\infty}(w_1)>0$ or
$\nu_{\infty}(w_2)>0$. Indeed, $w^k=w_1^kw_2^k$ and hence
$$0 < \nu_{\infty}(w) \leq \nu_{\infty}(w_1)+ \nu_{\infty}(w_2)\;,$$
which yields the claim.

\medskip
\noindent{\sc Case 2:} $w=(g_1,g_2)t$. Put $w_1 = (g_1g_2,1)$ and
$w_2=(1,g_2g_1)$. We claim that either $\nu_{\infty}(w_1)>0$ or
$\nu_{\infty}(w_2)>0$. Indeed, $w^{2k}=w_1^kw_2^k$ and hence
$$0 < \nu_{\infty}(w) \leq \frac{1}{2}(\nu_{\infty}(w_1)+ \nu_{\infty}(w_2))\;,$$
which yields the claim.

\medskip
\noindent Looking at elements $w_1$ and $tw_2t$ above we conclude
that there exists an element $u = (g,1)$ with $\nu_{\infty}(u) > 0$.
Replacing, if necessary, $u$ by its power we can assume that $g \in
G'$ (here we use that $H^1(G)$ is finite). Denote by $H \subset
\bar{G}$ the subgroup consisting of all elements of the form $(f,1)$
where $f \in G$. Clearly, $H$ is isomorphic to $G$ and $u \in G'$.
Furthermore, $t$ displaces $H$. Thus inequality \eqref{eq-master-1}
yields that
$$\nu(z) \leq 4\nu(t)\cdot cl_H(z)\;\;\forall z \in H'\;.$$
Substituting $z=u^k$, dividing by $k$ and passing to the limit as $k
\to \infty$ we get that
$$0 < \nu_{\infty}(u) \leq 4\nu(t)\cdot scl_H(u)\;.$$
Thus $scl_G(g) = scl_H(u)>0$. Therefore Bavard's theorem
\cite{Bavard} yields existence of a non-trivial homogeneous
quasi-morphism on $G$. \qed

\subsection{Inequalities with commutators}\label{sec-ineq-com}

Here we prove Theorem \ref{thm-master}(i). For an element $F \in G$,
we say that $g\in G$ is an {\it $F$-commutator} if $g=\Conj_f[F,h]$
for some $f,h\in G$. Note that the inverse of an $F$-commutator is
again an $F$-commutator.

Fix $F\in G$ such that the subgroups
$$H_0:= H,\;\; H_1:= \Conj_{F}H,\;\;...,\;\;H_{m} := \Conj_{F^{m}}H$$
pair-wise commute. We shall show that every element $x$ from the
commutator subgroup $H'$ with $cl_H(x)=m$ can be represented as a
product of seven $F$-commutators.  Note that given a
conjugation-invariant norm $\nu$ on $G$,  for every $F$-commutator
$g$ we have $\nu(g) \leq 2\nu(F)$. Thus we shall get that $\nu(x)
\leq 14\nu(F)$, which yields inequality \eqref{eq-master}.

We shall consider products $\prod_0^m \Conj_{F^i}(g_i)$, where
$g_i \in H$, $i =0,...,m$. Since $H_i$'s pair-wise commute, the
product of such elements $\prod_0^m \C_{F^i}(f_i)$ and $\prod_0^m
\Conj_{F^i}(g_i)$ can be computed component-wise: it equals
$\prod_0^m \Conj_{F^i}(f_ig_i)$.

\begin{lemma}
\label{rearrange-id} Let a collection of $g_i \in H$, $i=0,1,
\dots, m$ be such that $\prod_0^m g_i=1$. Then the product
$g=\prod_0^m \Conj_{F^i}(g_i)$ is an $F$-commutator.
\end{lemma}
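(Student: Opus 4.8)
The plan is to exploit the fact that the subgroups $H_0,\dots,H_m$ pairwise commute, so that conjugations by different powers of $F$ can be interleaved freely, and then to recognize the resulting expression as a telescoping product built out of a single "shift by $F$" operation. First I would introduce the partial products $s_k = g_0 g_1 \cdots g_k \in H$ for $k=0,\dots,m$, together with $s_{-1}=1$; the hypothesis $\prod_0^m g_i = 1$ says exactly $s_m = 1$. Then $g_i = s_{i-1}^{-1} s_i$ for each $i$. Substituting into $g = \prod_0^m \Conj_{F^i}(g_i)$ and using that the factors live in pairwise-commuting subgroups so the product may be computed component-wise, I expect to rewrite $g$ as a product in which consecutive terms partially cancel after being conjugated into a common "slot". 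Concretely, $\Conj_{F^i}(s_{i-1}^{-1} s_i) = \Conj_{F^i}(s_{i-1}^{-1}) \cdot \Conj_{F^i}(s_i)$, and one reorganizes the telescoping so that the $\Conj_{F^i}(s_i)$ term pairs with the $\Conj_{F^{i+1}}(s_i^{-1})$ term coming from the next factor.

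The key algebraic step is the following identity: setting $a_i := \Conj_{F^i}(s_i)$ for $i=-1,0,\dots,m$ (so $a_{-1}=1$ and, since $s_m=1$, also $a_m = 1$), I claim
\begin{equation}\label{eq-telescope}
g = \prod_{i=0}^{m} \Conj_{F^i}\bigl(s_{i-1}^{-1} s_i\bigr) = \Conj_{F^{-1}}\!\Bigl(\prod_{i=0}^{m} F\, a_{i-1}^{-1} F^{-1} \cdot a_i \Bigr)\cdot(\cdots),
\end{equation}
that is, after carrying the conjugation inward one recognizes the body as a product of terms of the form $(F a_{i-1}^{-1} F^{-1})\, a_i$. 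The point is that $\prod_{i=0}^m (F a_{i-1}^{-1}F^{-1}) a_i$ telescopes: regrouping, it equals $F\Bigl(\prod_{i=0}^m a_{i-1}^{-1} a_{i}'\Bigr)$-type expression where the $a$'s cancel in pairs except at the two ends $a_{-1}$ and $a_m$, both of which are $1$. What survives is precisely an expression of the shape $F c F^{-1} c^{-1} = [F, c]$ for a suitable $c \in G$ built from the $a_i$; conjugating this by the outer $\Conj_{F^{-1}}$ (or absorbing that conjugation) keeps it an $F$-commutator, since $\Conj_f[F,h]$ is by definition an $F$-commutator for any $f$. So the single $F$-commutator one ends up with is $g = \Conj_{f}[F, c]$ for appropriate $f, c$.

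I expect the main obstacle to be bookkeeping the order of the factors and the direction of the conjugations correctly: the telescoping only works if the commuting-subgroups hypothesis is invoked at exactly the right moment to move a $\Conj_{F^i}(s_i)$ past a $\Conj_{F^{i+1}}(\,\cdot\,)$, and one must check that every intermediate partial product used in the regrouping indeed lies in the span of commuting subgroups $H_j$ (it does, since each $s_k \in H$ and hence $\Conj_{F^k}(s_k) \in H_k$). Once the regrouping is set up, the verification that the result is literally $\Conj_f[F,h]$ for explicit $f,h$ is a routine identity in the free product modulo the commuting relations. The payoff, as noted in the surrounding text, is that $\nu(g)\le 2\nu(F)$ for any conjugation-invariant norm, which is what feeds into the bound $\nu(x)\le 14\,e_m(H)$; here we only need the statement that $g$ is an $F$-commutator.
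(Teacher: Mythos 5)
Your plan uses exactly the paper's machinery: the partial products $s_k = g_0\cdots g_k$ are the paper's $\phi_k$, and the element one commutes with $F$ is $\phi=\prod_{i=0}^{m-1}\Conj_{F^i}(s_i)$, so that $g=[F,\phi^{-1}]$ on the nose --- no outer $\Conj_{F^{-1}}$ is needed, and the stray ``$\cdot(\cdots)$'' in your display is spurious. The one imprecision is the word ``telescopes'': the $a_i$'s do not literally cancel in pairs; rather, each $\Conj_F(a_{i-1}^{-1})$ lies in $H_i$ and each $a_j$ lies in $H_j$, so the pairwise-commutation hypothesis lets you slide all the $\Conj_F(a_{i-1}^{-1})$'s to the left and all the $a_j$'s to the right, yielding $F\phi^{-1}F^{-1}\phi=[F,\phi^{-1}]$. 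The paper arrives at the same $\phi$ more cleanly by positing $g=[F,\phi^{-1}]$ with unknown $\phi_i\in H$ and solving the resulting triangular system $\phi_0=g_0$, $\phi_{i-1}^{-1}\phi_i=g_i$, with the constraint $\phi_m=1$ being exactly $\prod g_i=1$.
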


\begin{proof}
We will show that $g=[F,\phi^{-1}]$ where $\phi=\prod_0^{m-1}
\C_{F^i}(\phi_i)$, $\{\phi_i\}_{i=0}^{m-1}$ is a collection of
elements of $H$ which will be defined later. We set $\phi_m=1$ for
convenience of notation.

Note that $[F,\phi^{-1}]=\C_F(\phi^{-1})\phi$ and $\C_F(\phi^{-1})$
equals the product $\prod_0^{m-1} \C_{F^{i+1}}(\phi_i^{-1}) =
\prod_1^m \C_{F^i}(\phi_{i-1}^{-1})$ whose terms lie in
$H_1,...,H_{m}$. Hence
$$
[F,\phi^{-1}] = \C_F(\phi^{-1})\phi = \phi_0\cdot \prod_1^m
\C_{F^i} (\phi_{i-1}^{-1}\phi_i)
$$
and the equation $[F,\phi^{-1}]=g$ is equivalent to the system
$$
\begin{cases}
\phi_0 = g_0 \\
\phi_0^{-1}\phi_1 = g_1 \\
\phi_1^{-1}\phi_2 = g_2 \\
\quad\dots \\
\phi_{m-1}^{-1}\phi_m = g_m \\
\end{cases}
$$
The solution of this system is $\phi_k=\prod_0^k g_i$,
$k=0,1,\dots,m$. The equation $\phi_m=1$ is satisfied by the
assumption $\prod g_i=1$.
\end{proof}

\begin{lemma}
\label{rearrange} Let $g_1,g_2,\dots,g_m$ be a collection of
elements of $H$. Then $g=\prod_m^1 g_i$ equals an $F$-commutator
times the product $\prod_1^m \C_{F^i}(g_i)$.
\end{lemma}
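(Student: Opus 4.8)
The plan is to obtain this as an immediate consequence of Lemma \ref{rearrange-id}, applied to a well-chosen collection of elements of $H$. Write $g=\prod_m^1 g_i=g_mg_{m-1}\cdots g_1\in H$, and define a new collection $a_0,a_1,\dots,a_m\in H$ by $a_0:=g$ and $a_i:=g_i^{-1}$ for $i=1,\dots,m$. The point of this choice is that
$$\prod_0^m a_i=(g_mg_{m-1}\cdots g_1)\,g_1^{-1}g_2^{-1}\cdots g_m^{-1}=g\cdot g^{-1}=1,$$
so the hypothesis of Lemma \ref{rearrange-id} holds and $c:=\prod_0^m \Conj_{F^i}(a_i)$ is an $F$-commutator.

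It then remains only to recognize what $c$ is. The $i=0$ factor of the product defining $c$ is simply $a_0=g$, so $c=g\cdot\prod_1^m\Conj_{F^i}(g_i^{-1})$, the second product running in increasing order of $i$. The factors $\Conj_{F^i}(g_i^{-1})$ for $i=1,\dots,m$ lie in the pairwise commuting subgroups $H_1,\dots,H_m$, so the product may be reordered at will; reversing the order identifies it with $\bigl(\prod_1^m\Conj_{F^i}(g_i)\bigr)^{-1}$. Hence $c=g\cdot\bigl(\prod_1^m\Conj_{F^i}(g_i)\bigr)^{-1}$, i.e. $g=c\cdot\prod_1^m\Conj_{F^i}(g_i)$, which is precisely the claim.

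I do not expect a genuine obstacle here: the whole content is the substitution that turns the assertion into an instance of Lemma \ref{rearrange-id}. The only points requiring care are the two opposite ordering conventions ($\prod_m^1$ decreasing versus $\prod_1^m$ increasing) and the observation that pairwise commutativity is available among $H_1,\dots,H_m$ but not for $H_0=H$ with itself --- which is exactly why the one ``uncontrolled'' element $a_0=g$ is placed in the $i=0$ slot, where no commutation with it is ever needed.
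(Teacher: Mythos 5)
Your proof is correct and follows the same route as the paper: set $a_0=g$, $a_i=g_i^{-1}$, observe $\prod_0^m a_i=1$, and apply Lemma~\ref{rearrange-id}, then use commutativity of $H_1,\dots,H_m$ to invert the tail. You have simply spelled out the verification that the paper leaves implicit.
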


\begin{proof}
Introduce $g'_0=g$ and $g'_i=g_i^{-1}$. Note that $\prod_0^m
g'_i=1$. Then apply the previous lemma.
\end{proof}

\begin{lemma}\label{lem-two-Fcom}
Any commutator from $H$ is a product of two $F$-commutators.
\end{lemma}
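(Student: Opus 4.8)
The plan is to show that an arbitrary commutator $[f,g]$ with $f,g\in H$ equals $[c,g]$ for a suitable $F$-commutator $c$, and then to read off directly that $[c,g]$ is a product of two $F$-commutators.

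First I would invoke the same bi-commuting device used in the short proof of Theorem~\ref{thm-master}(ii). Since $H_1=\Conj_F(H)$ commutes with $H_0=H$, the element $a:=\Conj_F(f^{-1})$ commutes with $g$, and therefore $[fa,g]=[f,g]$. On the other hand $fa=f\,\Conj_F(f^{-1})=fFf^{-1}F^{-1}=[f,F]$, and $[f,F]=\Conj_f[F,f^{-1}]$ is by definition an $F$-commutator. Setting $c:=[f,F]$, we thus have $[f,g]=[c,g]$.

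Next I would simply expand $[c,g]=c\cdot(gc^{-1}g^{-1})$. The first factor is the $F$-commutator $c$. For the second, observe that $gc^{-1}g^{-1}=\Conj_g(c^{-1})$; since the family of $F$-commutators is closed under inversion (as recorded at the start of this section) and, directly from the identity $\Conj_b(\Conj_a[F,h])=\Conj_{ba}[F,h]$, is closed under conjugation by arbitrary elements of $G$, the element $gc^{-1}g^{-1}$ is again an $F$-commutator. Hence $[f,g]=c\cdot(gc^{-1}g^{-1})$ exhibits the commutator as a product of two $F$-commutators, as claimed.

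I do not expect a serious obstacle here: the manipulation is purely formal and valid in any group. The only place the hypothesis enters is the very first step, where one needs $H_0$ and $H_1$ to commute — i.e. $m\ge 1$ — which is part of the standing assumption. In contrast to Lemma~\ref{rearrange-id}, the full commuting chain $H_0,\dots,H_m$ is not needed, only its first pair; the subtle point to get right is merely the bookkeeping in the replacement $f\mapsto f\,\Conj_F(f^{-1})$ and the verification that it leaves $[f,g]$ unchanged.
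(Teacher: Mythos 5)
Your proof is correct, but it takes a genuinely different route from the paper's. The paper proves the lemma by invoking Lemma~\ref{rearrange-id} twice: it exhibits the two $F$-commutators $(fg)\,\Conj_F(g^{-1})\,\Conj_{F^2}(f^{-1})$ and $(f^{-1}g^{-1})\,\Conj_F(g)\,\Conj_{F^2}(f)$ (each a "telescoping" product whose $H$-components multiply to $1$) and checks, using the pairwise commutation of $H_0,H_1,H_2$, that their product collapses to $[f,g]$. That argument is of a piece with Lemmas~\ref{rearrange-id} and \ref{rearrange}, but it implicitly needs the conjugates of $H$ by $F^0,F^1,F^2$ to commute pairwise. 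Your argument instead recycles the displacement trick from Theorem~\ref{thm-master}(ii): you replace $f$ by $f\,\Conj_F(f^{-1})=[f,F]$ (legitimate because $\Conj_F(f^{-1})\in H_1$ commutes with $g\in H_0$), observe $[f,g]=[[f,F],g]$, and then read off $[c,g]=c\cdot\Conj_g(c^{-1})$ with both factors $F$-commutators by closure under inversion and conjugation. This is shorter, avoids Lemma~\ref{rearrange-id} entirely, and only uses the commutation of $H_0$ with $H_1$ (so it is valid as soon as $m\geq 1$, whereas the paper's argument literally needs $m\geq 2$). The paper's version has the virtue of staying within the systematic framework of the section, but yours is the more economical and slightly more general argument.
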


\begin{proof}
Consider a commutator $[f,g]$ with $f,g \in H$. Then by Lemma
\ref{rearrange-id}, the elements
$$
(fg) \C_F(g^{-1}) \C_{F^2}(f^{-1})
$$
and
$$
(f^{-1}g^{-1}) \C_F(g) \C_{F^2}(f)
$$
are $F$-commutators. Their product is $[f,g]$.
\end{proof}

\medskip
\noindent {\bf End of the proof of Theorem \ref{thm-master}(i):}
Consider $h=\prod_m^1 [f_i,g_i]$ with $f_i,g_i \in H$. By Lemma
\ref{rearrange}, $h$ equals an $F$-commutator times a product
$\theta:= \prod_1^m \C_{F^i}([f_i,g_i])$. The latter in its turn is
equal to the commutator of two products $\phi:= \prod_1^m
\C_{F^i}(f_i)$ and $\psi:= \prod_1^m \C_{F^i}(g_i)$ since the
subgroups $H_i$ and $H_j$ commute for $i\ne j$. This proves
inequality \eqref{eq-master-com}.

Applying again Lemma \ref{rearrange} we have that $\phi=fx$ and
$\psi=gy$ where $f = f_m...f_1$ and $g=g_m...g_1$ and $x,y$ are
$F$-commutators. We write
$$\theta=[fx,gy]=[f,g]\cdot \Conj_g \{ \Conj_f(g^{-1}xg \cdot y \cdot
x^{-1}) \cdot y^{-1}\}\;.$$  Since $f,g \in H$, we have by Lemma
\ref{lem-two-Fcom} that $[f,g]$ equals a product of two
$F$-commutators. Hence $\theta$ is a product of six $F$-commutators
and therefore $h$ is a product of seven $F$-commutators. As we
explained in the beginning of this section, this completes the proof
of the theorem.\qed

\subsection{Packing and distortion of subgroups}

Let $G$ be a group and $H\subset G$ a subgroup. Consider the
embedding of metric spaces $(H', cl_H) \mapsto (G',cl_G)$.
Obviously $cl_G(w) \leq cl_H(w)$ for all $w \in H'$. It turns out
that, after stabilization, this inequality can be refined provided
$H$ is $m$-displaceable in $G$: the larger $m$ is, the stronger
$H'$ is distorted in $G'$ with respect to the stable commutator
lengths.

\medskip
\noindent
\begin{thm} \label{thm-algpack}
$$scl_G (w) \leq \frac{1}{p(G,H)} scl_H (w) \;\; \forall w \in
H'\;.$$
\end{thm}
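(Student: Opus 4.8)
The plan is to reduce the statement to a single scaling inequality of the form $cl_G(w^N) \leq K \cdot cl_H(w) + (\text{lower order})$, with $K$ roughly $N/p(G,H)$, and then divide by $N$ and send $N\to\infty$. The starting point is inequality \eqref{eq-master-com} of Theorem \ref{thm-master}(i), but that inequality alone is too crude: it says $cl_G(x)\leq 2$ whenever $cl_H(x)=m = p(G,H)-1$, with no dependence on how large a power of $w$ we are looking at. The refinement I would aim for is to exploit the fact that $H$ is strongly $m$-displaceable for $m = p(G,H)-1$, so we may pack $m+1 = p(G,H)$ ``parallel copies'' of an element inside the conjugated subgroups $H_0,\dots,H_m$ which pairwise commute.

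Here is the key step. Fix $w\in H'$ and let $c = cl_H(w)$, so $w$ is a product of $c$ commutators in $H$; hence $w^{m} = w^{p(G,H)-1}$ has commutator length at most $(p(G,H)-1)c$ in $H$. Actually the right bookkeeping is: for any $N$, write $w^N$ in $H$ as a product of at most $Nc$ commutators, group these into $p := p(G,H)$ blocks of roughly $Nc/p$ commutators each, call them $b_0,\dots,b_{p-1}\in H'$ with $w^N = b_0 b_1\cdots b_{p-1}$ (up to reordering, which only costs a bounded additive error by Lemma \ref{rearrange}-type rearrangement since the blocks are being moved past commutators — one must be a little careful here). Now using the pairwise-commuting conjugated copies $H_i = \Conj_{F^i}H$, form the single element $B = \prod_{i=0}^{p-1}\Conj_{F^i}(b_i)$. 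On one hand, since the factors live in pairwise-commuting subgroups, $B$ is ``diagonal'' and a rearrangement argument in the spirit of Lemma \ref{rearrange-id} and Lemma \ref{lem-two-Fcom} shows $cl_G(B) \leq \text{const}$ — a bound independent of $N$, because each $\Conj_{F^i}(b_i)$ is itself a product of $cl_H(b_i)$ commutators but they can all be ``synchronized'' across the $p$ commuting slots, so that $B$ is a product of only $\approx Nc/p$ commutators in $G$ plus $O(1)$ correction $F$-commutators. On the other hand, $B$ projects (by collapsing the extra slots, i.e.\ by the identity-on-$H_0$, trivial-elsewhere structure) so that $B$ is conjugate in $G$ to something differing from $w^N$ by a bounded number of $F$-commutators. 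Combining, $cl_G(w^N) \leq Nc/p + O(1)$.

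The main obstacle is getting the rearrangement cost genuinely $O(1)$ rather than $O(N)$: naively permuting $Nc$ commutators to regroup them into $p$ blocks and then to synchronize them across the commuting copies $H_i$ could cost a number of extra commutators proportional to $N$, which would destroy the bound after dividing by $N$. The resolution is to never permute the individual commutators but to work blockwise and invoke Lemma \ref{rearrange} exactly once per block (total cost $\leq p-1 = O(1)$ rearrangement $F$-commutators), together with the observation that $\prod_{i=0}^{m} \Conj_{F^i}(g_i)$ with $\prod g_i = 1$ is a single $F$-commutator (Lemma \ref{rearrange-id}), which lets one ``fold'' the $p$ parallel tracks back down to a single track at the cost of one $F$-commutator per commutator-of-blocks rather than per original commutator. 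So the final count is: $cl_G(w^N) \leq \lceil Nc/p\rceil + \text{(bounded number of $F$-commutators, each of which is a product of $2$ commutators)}$. Dividing by $N$ and letting $N\to\infty$ gives
\begin{equation*}
scl_G(w) \;\leq\; \frac{c}{p(G,H)} \;=\; \frac{cl_H(w)}{p(G,H)}.
\end{equation*}
Finally, one upgrades $cl_H(w)$ to $scl_H(w)$ on the right-hand side by the standard trick: apply the inequality just proved to $w^k$ in place of $w$, using $cl_H(w^k)\leq k\,cl_H(w)$ is too lossy, so instead run the whole argument with $cl_H$ replaced throughout by counting the commutator length of $w^{k}$ and dividing by $k$ first; since $scl_G(w) = scl_G(w^k)/k$ and the bound gives $scl_G(w^k)\leq cl_H(w^k)/p(G,H)$, we get $scl_G(w)\leq cl_H(w^k)/(k\,p(G,H))$ for every $k$, and taking the infimum over $k$ replaces $cl_H(w^k)/k$ by $scl_H(w)$. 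This yields the claimed inequality $scl_G(w)\leq scl_H(w)/p(G,H)$ for all $w\in H'$.
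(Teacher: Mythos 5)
Your proposal has a genuine gap: it conflates strong displaceability with ordinary displaceability. The paper defines $p(G,H)=m+1$ where $m$ is the largest integer such that $H$ is $m$-displaceable, and $m$-displaceability only requires the existence of \emph{arbitrary} elements $\phi_0=1,\phi_1,\dots,\phi_m$ with $\Conj_{\phi_i}(H)$ pairwise commuting. Your entire rearrangement machinery — Lemma \ref{rearrange-id}, Lemma \ref{rearrange}, and the notion of an $F$-commutator — lives in the strong setting, where the $\phi_i$ are consecutive powers $F^i$ of a \emph{single} element. That structure is exactly what makes the telescoping in Lemma \ref{rearrange-id} work: $\Conj_F$ shifts the slot $H_i$ to $H_{i+1}$, so $\Conj_F(\phi^{-1})\phi$ collapses componentwise. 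For general $\phi_i$ there is no such shift operator, and I do not see how to make the ``$w^N$ equals $B$ up to a bounded number of corrections'' step go through; the $\phi_i^{-1}(\cdots)\phi_i$ conjugators simply pile up. So your argument, even made rigorous, proves $scl_G(w)\leq scl_H(w)/p_{\mathrm{strong}}(G,H)$ with a ``strong packing number'' that can be strictly smaller than $p(G,H)$ — a weaker statement than the one claimed. In the examples where the theorem is actually used (e.g.\ Example \ref{ex-packing}, where the displacing elements are the coordinate permutations $\widetilde I_j$, and the symplectic packing number example), the displacing elements are \emph{not} powers of a single element, so this is not a harmless loss.

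Setting this aside, the scaffolding of your approach is sound and the bookkeeping does give denominator $p$, not $p-1$: with $p$ blocks $b_0,\dots,b_{p-1}$ and $w^N=b_0b_1\cdots b_{p-1}$, one applies Lemma \ref{rearrange} to $b_1\cdots b_{p-1}$ (indices $1,\dots,p-1$ consuming slots $H_1,\dots,H_{p-1}$), absorbs $b_0$ into slot $H_0$, conjugates the single resulting $F$-commutator past $b_0$, and synchronizes the $p$ commuting tracks to get $cl_G(w^N)\leq 1+\lceil Ncl_H(w)/p\rceil$. (Your parenthetical worry about ``reordering'' is unnecessary — group the $Ncl_H(w)$ commutators into \emph{consecutive} blocks and nothing needs to be permuted; also your formula $B=\prod\Conj_{F^i}(b_i)$ assigns $b_i$ to the wrong slot, but this is a cosmetic fix.) The paper's proof of Theorem \ref{thm-algpack} is a genuinely different route: it invokes Bavard duality, writing $scl$ as a sup over homogeneous quasi-morphisms divided by their defects, and the key algebraic input is Proposition \ref{prop-1}, the additivity $\|\phi\|_{H_1\cdots H_N}=\sum_i\|\phi\|_{H_i}$ for pairwise-commuting subgroups. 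That argument makes no use of the commutator-rearrangement lemmas, needs no power structure on the $\phi_i$, and therefore handles the full generality of $p(G,H)$. Its downside is that it relies on Bavard's theorem, whereas an approach in the spirit of yours would be elementary and give explicit finite-$N$ estimates; but as written it applies only to the strong packing number.
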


\medskip
\noindent
\begin{example} \label{ex-packing} {\rm
Let $G=\widetilde{Sp(2n,R)}$ be the universal cover of the linear
symplectic group and let $H =\widetilde{Sp(2,R)} \subset G$. Here we
fix the splitting $\R^{2n} = \R^2\oplus\R^{2n-2}$. The monomorphism
$Sp(2,\R) \to Sp(2n,\R)$ which sends a matrix $A$ to $A \oplus
\id_{2n-2}$ induces the isomorphism of the fundamental groups
$\pi_1(Sp(2,\R))=\pi_1(Sp(2n,\R))=\Z$, and hence $H$ naturally
embeds into $G$. Let $(p_1,q_1,...,p_n,q_n)$ be the standard
symplectic coordinates on $\R^{2n}$. Denote by $I_j$ the symplectic
transformation which permutes $(p_1,q_1)$ and
$(p_j,q_j)$-coordinates. Write $\widetilde{I}_j$ for a lift of $I_j$
to $G$. Then the subgroups $\Conj_{I_j}(H)$ pairwise commute, and
hence $p(G,H) \geq n$. Denote by $e \in H$ the generator of the
center of $H$. One can show (see Remark \ref{rem-BIW} below) that
\begin{equation}\label{eq-comnorm-SL}
scl_H(e) = n\cdot scl_G(e)\;.
\end{equation}
 Thus the inequality in Theorem
\ref{thm-algpack} yields $p(G,H)\leq n$. We conclude that $p(G,H) =
n$ and the inequality is sharp. }
\end{example}

\medskip
\noindent \begin{example}{\rm Let $(M,\omega)$ be a symplectic
manifold, and let $U \subset M$ be an open subset. Let
$G=\Ham(M,\Omega)$ and let $H=\Ham(U,\omega)$. In this case the
algebraic packing number $p(G,H)$ has a simple geometric meaning:
It equals to the {\it geometric packing number} $p_{geom}(M,U)$
which is defined as the minimal number of diffeomorphisms from $G$
which take $U$ to pairwise disjoint subsets of $M$. In the case
when $U$ is a standard symplectic ball the geometric packing
number was intensively studied in the framework of the symplectic
packing problem (see \cite{Bi} for a survey). For instance, assume
that $M$ and $U$ are $2n$-dimensional symplectic balls. In the
case $n=1$ the geometric packing number is simply the integer part
of the ratio of the areas. In the case $n=2$ the situation is more
complicated: For instance, if the ratio of volumes of $M$ and $U$
lies in the interval $(8;(1+1/288)\cdot 8)$, the geometric packing
number equals $7$ (see \cite{MP}). It would be interesting to
explore the sharpness of the inequality in Theorem
\ref{thm-algpack} in these examples.}\end{example}

\medskip
\noindent The proof of Theorem \ref{thm-algpack} is based on the
following observation (thanks to Sasha Furman for help). For a
subgroup $H \subset G$ write $Q(H)$ for the set of homogeneous
quasi-morphisms on $H$ modulo morphisms, and for $\phi \in Q(H)$
put $$||\phi||_H =\sup_{x,y \in H} \phi([x,y])\;.$$

\medskip
\noindent
\begin{prop}\label{prop-1} Let $H_1,...,H_N$ be subgroups
of $G$ so that $H_i$ and $H_j$ commute for $i\neq j$. Put
$K=H_1\cdot ...\cdot H_N$. Then for every $\phi \in Q(K)$
$$||\phi||_K = \sum_{i=1}^N ||\phi||_{H_i}\;.$$
\end{prop}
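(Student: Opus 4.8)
\textbf{Proof proposal for Proposition \ref{prop-1}.}

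The plan is to prove the two inequalities separately. For the easy direction, $||\phi||_K \ge \sum_i ||\phi||_{H_i}$, I would simply note that each $H_i$ is a subgroup of $K$, so a commutator $[x,y]$ with $x,y \in H_i$ is in particular a commutator of elements of $K$; taking the supremum over $x,y \in H_i$ gives $||\phi||_{H_i} \le ||\phi||_K$ for each $i$. To upgrade this to a sum, I would first establish that the restriction $\phi|_{H_i}$ is a homogeneous quasi-morphism on $H_i$, and that for commuting subgroups a homogeneous quasi-morphism is ``additive across the factors'' on products of commuting elements: if $a_i \in H_i$ pairwise commute then $\phi(a_1 \cdots a_N) = \sum_i \phi(a_i)$. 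This last identity is the standard fact that a homogeneous quasi-morphism is a genuine homomorphism when restricted to any abelian subgroup (here, the abelian subgroup generated by the commuting $a_i$); it follows from homogeneity together with the defect bound applied to powers. Using this, pick $x_i, y_i \in H_i$ nearly realizing $||\phi||_{H_i}$, set $x = \prod x_i$, $y = \prod y_i$; since the $H_i$ commute, $[x,y] = \prod_i [x_i,y_i]$ with the factors $[x_i,y_i] \in H_i$ pairwise commuting, so $\phi([x,y]) = \sum_i \phi([x_i,y_i])$, which approaches $\sum_i ||\phi||_{H_i}$. This yields $||\phi||_K \ge \sum_i ||\phi||_{H_i}$.

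For the reverse inequality, $||\phi||_K \le \sum_i ||\phi||_{H_i}$, the key point is that an arbitrary commutator $[x,y]$ in $K$ must be controlled by the $H_i$-data. Write $x = x_1 \cdots x_N$ and $y = y_1 \cdots y_N$ with $x_i, y_i \in H_i$ (using that the $H_i$ commute and generate $K$, every element of $K$ has such a decomposition). Because the factors from distinct $H_i$ commute, $[x,y] = [x_1 \cdots x_N, y_1 \cdots y_N] = \prod_i [x_i, y_i]$, a product of pairwise commuting elements with $[x_i,y_i] \in H_i$. Applying the additivity identity from the previous paragraph, $\phi([x,y]) = \sum_i \phi([x_i,y_i]) \le \sum_i ||\phi||_{H_i}$. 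Taking the supremum over $x, y \in K$ gives the desired bound.

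I expect the main obstacle to be entirely in the bookkeeping of the commutator identity $[x_1\cdots x_N, y_1 \cdots y_N] = \prod_i [x_i, y_i]$ and the clean justification of the additivity property $\phi(\prod a_i) = \sum \phi(a_i)$ for pairwise-commuting $a_i$ lying in the various $H_i$; both are routine but must be handled carefully, in particular checking that products of elements from distinct commuting subgroups do commute with one another and that the sub-semigroup they generate is abelian so that the restriction of a homogeneous quasi-morphism is genuinely additive there. One subtlety worth flagging: one should verify that $\phi$, which is defined on $K$, indeed restricts to a homogeneous quasi-morphism on each $H_i$ (immediate, since the defining inequalities are inherited by subgroups), and that the quantities $||\phi||_{H_i}$ are finite (they are bounded by $D(\phi)$, the defect of $\phi$ on $K$), so the sum on the right-hand side makes sense. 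No deep input is needed beyond Bavard-type elementary properties of homogeneous quasi-morphisms.
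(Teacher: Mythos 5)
Your proof is correct and follows essentially the same route as the paper: decompose $x,y\in K$ as $x=\prod x_i$, $y=\prod y_i$ with $x_i,y_i\in H_i$, use the identity $[x,y]=\prod_i[x_i,y_i]$ together with the fact that a homogeneous quasi-morphism is additive on pairwise commuting elements to get $\phi([x,y])=\sum_i\phi([x_i,y_i])$, then take suprema. The paper simply observes that the tuples $(x_i,y_i)$ range over all of $\prod_i H_i\times H_i$ as $(x,y)$ ranges over $K\times K$, giving both inequalities in one stroke, whereas you write out the two directions separately; the content is the same.
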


\medskip
\noindent {\bf Proof of Proposition \ref{prop-1}:} Take any $x,y
\in H$ and write
$$x =x_1\cdot ... \cdot x_N,\; y = y_1 \cdot ... \cdot y_N \;,$$
where $x_i,y_i \in H_i$. Then
$$[x,y] = [x_1,y_1]\cdot ... \cdot [x_N,y_N]\;.$$
Since the commutators in the right hand side pair-wise commute we
get that for every quasi-morphism $\phi \in Q(K)$
$$\phi([x,y]) = \sum_{i=1}^N \phi([x_i,y_i])\;.$$
Since pairs $x_i,y_i$ can be chosen in an arbitrary way we get the
desired equality. \qed

\medskip
\noindent{\bf Proof of Theorem \ref{thm-algpack}:} Suppose that
$p(G,H)\geq N$. Then there exist elements $g_1=1,g_2,...,g_N$ so
that subgroups $H_i:=g_iHg_i^{-1}$ pair-wise commute. For every
$\phi \in Q(G)$ we have $||\phi||_{H_i} = ||\phi||_H$. Put $K
=H_1\cdot ...\cdot H_N$. Applying Proposition \ref{prop-1} we have
\begin{equation}\label{eq-1}
||\phi||_G \geq ||\phi||_K = N||\phi||_H\;.
\end{equation}
Denote by $Q_*(H)$ the set of non-vanishing quasi-morphisms from
$Q(H)$, and by $Q_*(G,H)$ the set of quasi-morphisms from $Q_*(G)$
which restrict to a non-vanishing quasi-morphism on $H$. Apply now
Bavard's theorem \cite{Bavard}: given $w \in H'$ we have
$$scl_H(w) = \frac{1}{2}\sup_{\phi \in Q_*(H)}
\frac{\phi(w)}{||\phi||_H} \geq \frac{1}{2}\sup_{\phi \in
Q_*(G,H)} \frac{\phi(w)}{||\phi||_H}\;.$$ Using inequality
\eqref{eq-1} above and applying the same Bavard's theorem we have
\begin{equation}
\label{eq-2} scl_H(w) \geq N \cdot\frac{1}{2}\sup_{\phi \in
Q_*(G,H)} \frac{\phi(w)}{||\phi||_G} = N
\cdot\frac{1}{2}\sup_{\phi \in Q_*(G)} \frac{\phi(w)}{||\phi||_G}=
Nscl_G(w)\;.
\end{equation}
The equality in the middle follows from the fact that for $\phi
\in Q_*(G)\setminus Q_*(G,H)$ and $w \in H'$ one has $\phi(w) =
0$. Using inequality \eqref{eq-2}, we readily complete the proof.
\qed

\medskip
\noindent\begin{rem}\label{rem-BIW} {\rm Denote by $G_n$ the
universal cover of the group $Sp(2n,\R)$ and by $e_n \in G_n$ the
generator of $\pi_1(Sp(2n,\R))$ with  Maslov index $2$. The group
$G_n$ carries unique homogeneous quasi-morphism $\mu_n$ with
$\mu_n(e_n)=1$ (see \cite{Barge-Ghys}). Put
$$I_n := \frac{||\mu_n||_{G_n}}{||\mu_1||_{G_1}}\;.$$
One can show that $I_n=n$. The only known to us proof of this
innocently looking fact is surprisingly involved: it can be
extracted from \cite{BIW} (thanks to A.~Iozzi and A.~Wienhard for
illuminating consultations). By the above-cited theorem due to
Bavard
$$\frac{scl_{G_1}(e_1)}{scl_{G_n}(e_n)}=I_n\;,$$
which proves equality \eqref{eq-comnorm-SL} above. }\end{rem}

\section{Topological arguments }\label{sec-topo}

\subsection{Portable manifolds}\label{subsec-to-port}

Let $M$ be a portable manifold. We shall use notations of
Definition \ref{def-portable}.

\begin{lemma}\label{lem-port-1}
There exists a neighborhood $U$ of the core $M_0$ of $M$ and a
diffeomorphism $\phi \in \Diff_0(M)$ so that the sets $\phi^i(U)$,
$i \geq 1$ are pair-wise disjoint.
\end{lemma}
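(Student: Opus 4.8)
The plan is to exploit the attractor property of the flow $X^t$ together with the displacement diffeomorphism $\theta$ from Definition \ref{def-portable}. First I would fix, using the second bullet of the definition, a diffeomorphism $\theta\in\Diff_0(M)$ with $\theta(M_0)\cap M_0=\emptyset$. Since $M_0$ is compact and the condition $\theta(M_0)\cap M_0=\emptyset$ is open, I can enlarge $M_0$ slightly: there is a compact neighborhood $V$ of $M_0$ with $\theta(V)\cap V=\emptyset$. Now apply the attractor property to the compact set $K=V$: there is $\tau>0$ with $X^\tau(V)\subset M_0\subset \mathrm{int}\,V$. Set $U=\mathrm{int}\,V$; then $U$ is a neighborhood of the core and $X^\tau(U)\subset U$ (after possibly shrinking so that $X^\tau(\overline U)\subset U$), so the forward iterates $X^{n\tau}(U)$ form a nested decreasing sequence with $\bigcap_n X^{n\tau}(U)\subset M_0$.

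The diffeomorphism I want is essentially $\phi=\theta\circ X^\tau$ (truncated to have compact support — note $X$ is only complete, not compactly supported, so I must cut it off away from a large compact set that contains all the sets in play, which does not affect the argument since everything happens inside $V$). The key computation is to show $\phi^i(U)$ are pairwise disjoint. The mechanism: $X^\tau$ contracts $U$ into $M_0$, $\theta$ then moves that contracted copy off $M_0$ (hence off $V\supset U$), and further applications of $X^\tau$ push things back toward $M_0$ but $\theta$ keeps throwing the "already displaced" material further out, so no two iterates can meet. More precisely I would argue by the following bookkeeping: let $A=M\setminus \overline U$ (the "outside"). Check that $\phi(U)\subset \theta(M_0)\subset A$, and that $\phi(A)\subset A$ — the second inclusion holds because $X^\tau(A)$, while it may partially enter $U$, lands inside $X^\tau(M)\setminus X^\tau(\overline U)$... here I need to be careful; the cleaner route is to track a single point. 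For $x\in U$: $X^\tau(x)\in M_0$, so $\phi(x)=\theta(X^\tau(x))\in\theta(M_0)$, which is disjoint from $V\supset M_0\supset X^\tau(\text{anything})$. Then $\phi^2(x)=\phi(\phi(x))$ with $\phi(x)\notin V$; I claim $\phi(x)\notin\overline U$ forces $\phi^j(x)\notin \overline U$ and more strongly $\phi^j(x)\notin\theta(M_0)$ is false — instead I should show $\phi^{i}(U)\cap\phi^j(U)=\emptyset$ for $i<j$ by applying $\phi^{-i}$: equivalently $U\cap\phi^{j-i}(U)=\emptyset$, so it suffices to prove $U\cap\phi^k(U)=\emptyset$ for all $k\ge1$. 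For $k\ge1$, $\phi^k(U)=\phi^{k-1}(\phi(U))\subset\phi^{k-1}(\theta(M_0))$, and I would show inductively that $\phi^{k-1}(\theta(M_0))$ stays inside the region $\theta(M_0)\cup\big(M\setminus \overline U\big)$, which is disjoint from... no — $\theta(M_0)$ could re-enter. The honest fix: choose $V$ large enough that additionally $\theta(M_0)\subset \mathrm{int}\,V$ is \emph{false} is what we want, i.e. choose $U$ \emph{small}, equal to a small neighborhood of $M_0$ with $\theta(\overline U)\cap\overline U=\emptyset$ AND $X^\tau(\overline U)\subset U$; then $\phi(\overline U)=\theta(X^\tau(\overline U))\subset\theta(U)$, which is disjoint from $\overline U$. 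Now inductively $\phi^k(\overline U)\subset\theta(U)$ for every $k\ge1$? Not quite, because $\phi(\theta(U))=\theta(X^\tau(\theta(U)))$ and $X^\tau(\theta(U))$ need not lie in $U$. The genuinely correct and standard argument is: $\phi^k(\overline U)\subset \theta\big(X^\tau(\phi^{k-1}(\overline U))\big)$, and since $X^\tau$ is a contraction-to-$M_0$ in the sense that for the \emph{fixed} large compact set $W$ containing all orbits we care about, iterating the attractor property gives $X^{n\tau}(W)\subset M_0$ for $n$ large — but we only apply one $X^\tau$ per $\phi$, so I instead use the global statement: for \emph{any} compact $K$, $X^\tau(K)\subset M_0$ eventually, applied with $K$ running over $\phi^{k}(\overline U)$. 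This requires the $\phi^k(\overline U)$ to stay in a fixed compact set, which holds because everything lands in $\overline{M_0}\cup\theta(\overline{M_0})\cup\dots$, a growing union — so I should first establish that $\bigcup_k\phi^k(\overline U)$ is precompact, e.g. contained in $\theta(M_0)$ together with $M_0$, by proving $\phi(M_0\cup\theta(M_0))\subset M_0\cup\theta(M_0)$: indeed $X^\tau(M_0\cup\theta(M_0))\subset M_0$ (enlarging $\tau$ so $X^\tau$ absorbs the compact $M_0\cup\theta(M_0)$), then $\theta$ of that is in $\theta(M_0)$. So with $\tau$ chosen so that $X^\tau(M_0\cup\theta(M_0))\subset \mathrm{int}\,M_0$, set $\phi=\theta\circ X^\tau$ (cut off outside a large ball containing $M_0\cup\theta(M_0)$): then $\phi(M_0)\subset\theta(M_0)$ and $\phi(\theta(M_0))\subset\theta(M_0)$, hence $\phi^k(M_0)\subset\theta(M_0)$ for all $k\ge1$, so taking $U$ a neighborhood of $M_0$ small enough that $\phi^k(U)\subset\theta(M_0)$ still holds for each relevant $k$ — here I need uniformity, which comes from $\phi(\overline U)\subset\theta(M_0)$ once $U$ is chosen with $X^\tau(\overline U)\subset M_0$, followed by $\phi$-invariance of $\theta(M_0)$ — and $\theta(M_0)\cap U=\emptyset$ provided $U\subset \mathrm{int}\,V$ with $\theta(M_0)\cap V=\emptyset$. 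Then $\phi^k(U)\subset\theta(M_0)$ is disjoint from $U=\phi^0(U)$ for all $k\ge1$, and applying $\phi^{-i}$ gives pairwise disjointness of all $\phi^i(U)$, $i\ge0$; restricting to $i\ge1$ (or re-indexing) finishes the lemma.

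The main obstacle, as the above meandering indicates, is organizing the compactness bookkeeping cleanly: one must pick $\tau$ large enough that $X^\tau$ \emph{simultaneously} contracts $M_0$, $\theta(M_0)$, and a neighborhood $U$ of $M_0$ into $M_0$, and pick $U$ small enough to be disjoint from $\theta(M_0)$, and then verify the single invariance statement $\phi(\theta(M_0))\subset\theta(M_0)$ which makes the induction trivial. A secondary technical point is the cutoff: $X$ is complete but not compactly supported, so $X^\tau$ is not in $\Diff_0(M)$; I replace it by a compactly supported diffeomorphism agreeing with $X^\tau$ on a large compact set $W\supset M_0\cup\theta(M_0)$, which is legitimate because all the action takes place inside $W$, and $\theta\in\Diff_0(M)$ already, so $\phi\in\Diff_0(M)$. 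Modulo these care points the argument is elementary dynamics of an attractor composed with a displacement.
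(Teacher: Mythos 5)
Your final argument is correct and reaches the same goal by a closely related but not identical route. The paper pushes the vector field forward by $\theta$, setting $Y=\theta_*X$ and $V=\theta(U)$, so that $V$ is an attractor of $Y$; it then takes $\phi$ to be a compactly supported cutoff of $Y^\tau$ with $\overline{\phi(U\cup V)}\subset V$, and deduces pairwise disjointness from the nested-annulus observation $\phi^i(U)\subset\phi^{i-1}(V)\setminus\phi^i(V)$. You instead take $\phi$ to be $\theta$ composed with a cutoff of $X^\tau$, arrange one big contraction $X^\tau\bigl(M_0\cup\theta(M_0)\cup\overline U\bigr)\subset M_0$, and deduce disjointness from the forward-invariance $\phi(\theta(M_0))\subset\theta(M_0)$ together with $\overline U\cap\theta(M_0)=\emptyset$, finishing by applying $\phi^{-i}$. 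Both are elementary dynamics of an attractor plus a displacement; the paper's version avoids the explicit induction by packaging it in the nesting $V\supset\phi(V)\supset\phi^2(V)\supset\cdots$, while yours isolates the single invariance statement $\phi(\theta(M_0))\subset\theta(M_0)$. Your compactness and cutoff bookkeeping is sound, but do tighten the write-up: the several visible false starts should be removed, and the proof should be presented in the clean form of your last two paragraphs (fix $U$ with $\overline U\cap\theta(M_0)=\emptyset$, fix one $\tau$ by applying the attractor property to $\overline U\cup\theta(M_0)$, cut off $X^\tau$ on a compact set swallowing the $[0,\tau]$-orbits of that set, set $\phi=\theta\circ\widetilde{X^\tau}$, prove $\phi^k(\overline U)\subset\theta(M_0)$ by the one-line induction, conclude).
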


\begin{proof} Choose a sufficiently small neighbourhood
$U$ of the core so that $\theta(U) \cap \text{Closure}(U) =
\emptyset$. Put $V = \theta(U)$ and consider the vector field $Y =
\theta_*X$ on $M$. Note that $V$ is an attractor of $Y$. In
particular there exists $\tau>0$ large enough so that  the closure
of $Y^{\tau}(U \cup V)$ is contained in $V$. Cutting off
$Y^{\tau}$ outside a sufficiently large compact set, we get that
there exists a diffeomorphism $\phi \in \Diff_0(M)$ so that
$$\text{Closure}\;\phi(U \cup V) \subset V\;.$$
Observe that $\phi^i(U) \subset \phi^{i-1}(V) \setminus
\phi^i(V)$. Thus the sets $\phi^i(U)$, $i \geq 1$ are pair-wise
disjoint.
\end{proof}

\medskip
\noindent{\bf Proof of Theorem \ref{thm-portable}}: Let $\nu$ be
any conjugation-invariant norm on $\Diff_0(M)$. It suffices to show that
$\nu$ is bounded.

 We shall use
notations of Definition \ref{def-portable} of a portable manifold.
Look at the neighborhood $U$ of the core and at the diffeomorphism
$\phi$ from Lemma \ref{lem-port-1}. Note that $\phi$ $m$-displaces
the subgroup $\Diff_0(U)$ for any $m$. Take any diffeomorphism $h
\in \Diff_0(U)$. Since the group $\Diff_0(U)$ is perfect, it follows
from inequality \eqref{eq-master-1} that $\nu(h) \leq 14\nu(\phi)$.

Further, take any diffeomorphism $f \in \Diff_0(M)$. The first
item of the Definition \ref{def-portable} guarantees that for
$\tau>0$ large enough $X^{\tau}(\supp f) \subset U$. Applying the
ambient isotopy theorem, we can find a diffeomorphism $\psi \in
\Diff_0(M)$ with $\psi(\supp f) \subset U$. Thus $\psi f
\psi^{-1}$ lies in $\Diff_0(U)$. We conclude that
$$\nu(f) = \nu(\psi f
\psi^{-1} )\leq 14 \nu(\phi)$$ which implies that $\nu$ is bounded.
This completes the proof. \qed

\medskip
\noindent{\bf Proof of Theorem \ref{thm-uni-perf-port}}: The proof
above shows that the diffeomorphism $\phi$ $m$-displaces the
subgroup $H:=\Diff_0(U)$ for any $m$. Corollary \ref{cor-port} above
implies that $cl_G(h) \leq 2$ for all $h \in \Diff_0(U)$, where
$G=\Diff_0(M)$. But every element $f \in G$ is conjugate to an
element from $H$. Thus $cld(M) \leq 2$. \qed

\medskip
\noindent \begin{rem}{\rm Theorem \ref{thm-portable} admits the
following straightforward  generalization. Let $G$ be any group
acting by homeomorphisms on a topological space $X$. Assume that
there exist two disjoint open subsets $U,V \subset X$ and an element
$\phi \in G$ which satisfy the following two easily verifiable
properties:
\begin{itemize}
\item[{(i)}] $\text{Closure}\;\phi(U \cup V) \subset V\;;$
\item[{(ii)}] For every finite collection of elements
$\psi_1,...,\psi_k \subset G$ there exists $h \in G$ so that
$$h\Big{(}\bigcup_{i=1}^k \text{support}(\psi_i)\Big{)} \subset U\;.$$
\end{itemize}
Then the commutator group $G'$ is bounded. }
\end{rem}

\subsection{Spheres} \label{subsec-to-sph}

\begin{lemma}\label{lem-discs}
Every diffeomorphism $f \in \Diff_0(S^n)$ can  be written as
$f=gh$ where $g \in \Diff_0(S^n \setminus \{z\})$ and $h \in
\Diff_0(S^n \setminus \{w\})$ for some points $z,w \in S^n$.
\end{lemma}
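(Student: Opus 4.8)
The plan is to produce the decomposition from an isotopy joining the identity to $f$, factoring $f_t$ continuously in $t$ into a piece that is manifestly supported away from a point $z$ and a piece supported away from a point $w$. Thus we may assume $f\neq\id$ and fix a smooth isotopy $\{f_t\}_{t\in[0,1]}$ in $\Diff_0(S^n)$ with $f_0=\id$ and $f_1=f$. First I would choose a point $w\in S^n$ whose orbit $\Gamma:=\{f_t(w):t\in[0,1]\}$ is a proper subset of $S^n$. For $n\ge 2$ this is automatic, since $\Gamma$ is the image of a smooth curve and hence has measure zero; for $n=1$ one first replaces $\{f_t\}$ by the isotopy that is affine in the lift $\widetilde f$ normalised by $\widetilde f(0)\in[0,1)$, whose orbit through the image of $0$ is an arc of length $\widetilde f(0)<1$. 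Fix $z\in S^n\setminus\Gamma$.

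Next choose a closed ball $\bar B\subset S^n$ with $w\in\mathrm{int}\,\bar B$, small enough that $z\notin\bigcup_{t\in[0,1]}f_t(\bar B)$; this is possible because, by compactness, this union shrinks to $\Gamma$ as $\bar B$ shrinks to $\{w\}$. Then $\{f_t|_{\bar B}\}_{t\in[0,1]}$ is a smooth isotopy of embeddings $\bar B\hookrightarrow S^n\setminus\{z\}$ starting at the inclusion, and its track is a compact subset of $S^n\setminus\{z\}$. By the isotopy extension theorem there is a smooth ambient isotopy $\{G_t\}_{t\in[0,1]}$ of $S^n$, compactly supported in $S^n\setminus\{z\}$, with $G_0=\id$ and $G_t|_{\bar B}=f_t|_{\bar B}$ for all $t$. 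Set $g:=G_1$. By construction $g\in\Diff_0(S^n\setminus\{z\})$, and $g$ coincides with $f$ on $\bar B$, in particular on a neighbourhood of $w$.

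Now put $h:=g^{-1}f$, so that $f=gh$ and $h\in\Diff_0(S^n)$; it remains to check $h\in\Diff_0(S^n\setminus\{w\})$, and for this I would track the isotopy $h_t:=G_t^{-1}f_t$, which satisfies $h_0=\id$ and $h_1=h$. Since $G_t$ agrees with $f_t$ on $\bar B$, we have $G_t(\bar B)=f_t(\bar B)$ and $G_t^{-1}=(f_t|_{\bar B})^{-1}$ on that set; hence $h_t$ restricts to the identity on $\mathrm{int}\,\bar B$ for every $t$. Therefore each $h_t$ is compactly supported in $S^n\setminus\{w\}$, the path $\{h_t\}$ stays in $\Diff_0(S^n\setminus\{w\})$, and so does its endpoint $h$. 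This gives $f=gh$ with $g\in\Diff_0(S^n\setminus\{z\})$ and $h\in\Diff_0(S^n\setminus\{w\})$, as required.

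The point that needs real care — and the reason for running everything through the explicit isotopies $\{G_t\}$ and $\{h_t\}$, rather than choosing $g$ abstractly and only then setting $h=g^{-1}f$ — is that $g$ and $h$ must lie in the \emph{identity components} of the punctured diffeomorphism groups, not merely be supported in $S^n\setminus\{z\}$ and $S^n\setminus\{w\}$. The remaining auxiliary step, keeping the track of $\bar B$ under the isotopy away from $z$, is routine for $n\ge 2$ and needs only the minor adjustment indicated above when $n=1$.
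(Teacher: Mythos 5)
Your argument is correct and is essentially the paper's own proof: pick a point with proper orbit, shrink a ball around it until its track misses some $z$, use isotopy extension to produce $g_t\in\Diff_0(S^n\setminus\{z\})$ agreeing with $f_t$ on the ball, and set $h=g^{-1}f$, which then fixes a neighbourhood of the centre $w$. The only cosmetic difference is that you choose $w$ before $z$ and the paper chooses the disc $D$ first and takes $w$ in its interior afterward; your extra remark on $n=1$ is a genuine (if minor) point the paper glosses over.
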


\medskip
\noindent Since $S^n \setminus \{\text{point}\}=\R^n$ is a
portable manifold, Theorem \ref{thm-1}(ii) follows from Theorem
\ref{thm-portable} and Theorem \ref{thm-uni-perf}(i) follows from
Theorem \ref{thm-uni-perf-port}.

\medskip
\noindent{\bf Proof of Lemma \ref{lem-discs}:} This fact is
standard: Let $\{f_t\}$, $t \in [0;1]$ be a path in $\Diff_0(S^n)$
with $f_0=\id$ and $f_1=f$. Choose a sufficiently small closed disc
$D \subset S^n$ so that $X:= \bigcup_t f_t(D) \neq S^n$. Pick a
point $z \notin X$. Since $S^n \setminus \{z\}$ is diffeomorphic to
$\R^n$, there exists a path $\{g_t\}$ of diffeomorphisms from
$\Diff_0(S^n \setminus \{z\})$ such that $g_0= \id, g_t|_D=f_t|_D$.
Pick a point $w$ in the interior of $D$. Note the path
$\{g_t^{-1}f_t\}$ is compactly supported in $S^n \setminus \{w\}$.
Thus the diffeomorphisms $g:=g_1$ and $h:= g^{-1}f$ are as required
in the lemma. \qed

\subsection{Three-manifolds}\label{subsec-to-threeman}

Here we prove Theorem \ref{thm-1}(iii). By a {\it graph} in a
manifold we mean a piecewise smoothly embedded graph. By a smooth
isotopy of a graph we mean an isotopy which extends to a smooth
isotopy of its tubular neighborhood. We shall use without a
special mentioning the following fact (see e.g. \cite{Luft}): any
smooth compactly supported diffeomorphism $\phi$ of an open
handlebody $U$ is isotopic to the identity through compactly
supported diffeomorphisms, that is $f \in \Diff_0(U)$.

\medskip
\noindent
\begin{lemma}[Fundamental Lemma]\label{lem-fund} Let $\Gamma$ and $K$ be two disjoint graphs and $M$.
Let $f_t: \Gamma \to M$, $t \in [0;1]$  be a smooth isotopy with
$f_0|_{\Gamma}=\id$ and $f_1(\Gamma)\cap K = \emptyset$. Then
there exist a diffeomorphism $h$ of $M$ supported in a ball and a
diffeomorphism $\phi \in \Diff_0(M \setminus K)$ so that
$$f_1|_{\Gamma}=h \circ \phi|_{\Gamma}\;.$$
\end{lemma}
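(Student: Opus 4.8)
The plan is to carry out the isotopy extension / general position argument in two stages: first reduce the ambient isotopy $\{f_t\}$ of the graph $\Gamma$ to one that takes place within a small neighborhood of a $1$-complex, then split off the portion that crosses $K$ into a factor supported in a ball. First I would cover the trace of the isotopy, namely the compact set $\bigcup_{t\in[0;1]}f_t(\Gamma)\subset M$, together with $K$, by observing that this trace is (the image of) a compact $2$-dimensional complex. Since $\dim M=3$, a generic perturbation of the isotopy (keeping endpoints $f_0=\mathrm{id}$ and $f_1$ fixed, and keeping $f_1(\Gamma)$ disjoint from $K$) makes the trace meet $K$ in only finitely many points, at distinct times $0<t_1<\dots<t_\ell<1$; I would arrange these crossing points to lie in the interiors of small disjoint balls $B_1,\dots,B_\ell$ that are disjoint from $K$ away from... — more precisely, each $B_j$ meets $K$ in (at most) a single unknotted arc, and the crossings of $\Gamma$ through $K$ all take place inside $\bigsqcup_j B_j$.

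Next, by the isotopy extension theorem I would extend $\{f_t\}$ to an ambient isotopy $\{F_t\}\in\Diff_0(M)$ with $F_0=\mathrm{id}$, $F_t|_\Gamma=f_t$. Using a tubular neighborhood of the trace I can take the support of $\{F_t\}$ to be contained in an open handlebody $U$ (a regular neighborhood of the $2$-complex traced out, thickened), so that $F_1\in\Diff_0(U)$; by the cited fact about open handlebodies this is automatic once the support is so confined. The key local move is then: near each crossing time $t_j$, replace the passage of $f_t(\Gamma)$ through $K$ by a compactly-supported "finger move" inside the ball $B_j$, i.e. write $F_1=h\circ\phi$ where $h$ is supported in $\bigsqcup_j B_j$ (which, after an isotopy, we may assume lies in a single ball, since finitely many disjoint balls in a connected manifold can be engulfed in one ball — or one simply takes $h$ to be a product of the $\ell$ local diffeomorphisms and absorbs the product into one ball-supported diffeomorphism), and $\phi$ is an isotopy of $\Gamma$ to $f_1(\Gamma)$ that never crosses $K$, hence extends to an element of $\Diff_0(M\setminus K)$. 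Concretely, $\phi:=h^{-1}\circ F_1$; one checks $\phi|_\Gamma = h^{-1}\circ f_1|_\Gamma$, and by choosing $h$ to agree with $F_1$ near the crossings one sees that the isotopy realizing $\phi$ can be taken in $M\setminus K$, so $\phi\in\Diff_0(M\setminus K)$.

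The main obstacle is the middle step: arranging, by general position, that all interactions of the moving graph with the fixed graph $K$ are localized in balls and are of the simplest possible type (a finger passing through a sheet), and then showing that the "undoing" of each such crossing is genuinely supported in a ball while the complementary isotopy avoids $K$. This is where one must be careful that the perturbation of $\{f_t\}$ does not disturb $f_0=\mathrm{id}$ or the disjointness $f_1(\Gamma)\cap K=\emptyset$, and that the finitely many local balls can be chosen mutually disjoint and then merged — the dimension count $2+1=3=\dim M$ is exactly what makes the trace-meets-$K$ set $0$-dimensional and hence amenable to this surgery. Everything else (isotopy extension, confining support to a handlebody, the handlebody fact) is standard and quotable.
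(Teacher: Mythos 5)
Your outline follows the same general strategy as the paper---perturb to general position so that the isotopy meets $K$ in finitely many transversal crossings, localize each crossing in a small ball, do a ``finger move'' to avoid $K$ there, and then gather the corrections into a single ball-supported diffeomorphism $h$ while the remaining motion becomes $\phi\in\Diff_0(M\setminus K)$. But the proposal stops short of the two points that actually carry the proof.

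First, the gathering step is not automatic. The crossings occur at distinct times $t_1<\dots<t_\ell$, so the local fixes do not sit side by side at time $1$: what you really get is a word of the form $f_1|_\Gamma=\phi_\ell h_\ell\cdots\phi_1 h_1\phi_0|_\Gamma$, where each $h_j$ is supported in a ball $B_j$ near the $j$-th crossing and each $\phi_j$ avoids $K$. To extract a single $h$ you must push every $h_j$ to the left, replacing it by its conjugate $(\phi_\ell\cdots\phi_j)\,h_j\,(\phi_\ell\cdots\phi_j)^{-1}$, whose support is $(\phi_\ell\cdots\phi_j)(B_j)$. A product of diffeomorphisms supported in balls is supported in a ball only if those balls are \emph{pairwise disjoint}, and nothing in your setup controls where the intermediate isotopies carry $B_j$. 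The paper's entire machinery of trajectories $\gamma_i$, thin parallelepipeds $P_i$ with property \eqref{eq-tails}, and the requirement that the cut-off vector fields of the $\phi_i$ stay parallel to the $u$-axis inside every $P_j$, exists precisely to force the conjugated balls $B_i'=(\phi_N\cdots\phi_i)(B_i)$ to remain inside $P_i$, hence pairwise disjoint (equation \eqref{eq-fin-step}). Your fallback remark that ``one simply takes $h$ to be a product of the $\ell$ local diffeomorphisms'' begs this question. Relatedly, setting $\phi:=h^{-1}F_1$ trivially gives the factorization, but then $\phi\in\Diff_0(M\setminus K)$ is exactly what must be proved; ``choosing $h$ to agree with $F_1$ near the crossings'' does not by itself produce a compactly supported isotopy of $\phi$ in $M\setminus K$, because $F_1$ has been built from an isotopy that \emph{did} cross $K$.

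Second, your general-position step is incomplete. Making $\{(x,t):f_t(x)\in K\}$ a finite set (your $2+1=3$ count, the paper's (C1)) is not enough. One also needs that, for each crossing trajectory $\gamma_i=\{f_t(x_i)\}$, the rest of the moving graph $f_t(\Gamma\setminus\{x_i\})$ never hits $\gamma_i$; the generic intersection here is again zero-dimensional but need not be empty. Without removing these secondary crossings (the paper's Type~I modifications, upgrading (C3) to (C3')) the parallelepipeds $P_i$ cannot be chosen thin enough to satisfy \eqref{eq-tails}, and the whole disjointness argument collapses. This step is absent from your proposal.

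A minor side remark: the claim that a regular neighborhood of the traced-out $2$-complex is an open handlebody is not generally true and is not needed for the argument; it is a distraction here.
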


\medskip
\noindent Let us prove the theorem assuming the lemma.

\medskip
\noindent {\bf Proof of Theorem \ref{thm-1}(iii):} Take any norm
$\nu$ on $\Diff_0(M)$. A graph is called {\it the Heegard graph} if
its complement is diffeomorphic to an open handlebody. Every
three-manifold contains a Heegard graph (for instance, a
neighborhood of the 1-skeleton of a triangulation of $M$). Choose a
pair of disjoint Heegard graphs $L$ and $K$ in $M$. Fix a
sufficiently small tubular neighborhood $U$ of $L$. Since
$U,M\setminus K$ and $M \setminus L$ are open handlebodies and
therefore are portable, Theorem \ref{thm-portable} implies that the
norm $\nu$, when restricted to $\Diff_0$ of these submanifolds, does
not exceed some constant $C>0$. We shall assume also that the same
inequality holds for the restriction of $\nu$ to $\Diff_0$ of any
ball in $M$ (we use here that all balls are pair-wise isotopic and
portable).

We shall show that
\begin{equation}\label{eq-3mds}
\nu(f) \leq 5C
\end{equation}
for every $f \in \Diff_0(M)$ with $f(U) \cap K = \emptyset$. Note
that this yields the same inequality for {\it every}  $f$. Indeed,
perturbing $K$ to $K'$ by a small ambient isotopy of $M$ and
shrinking $U$ to $U'$ by an ambient isotopy of $M$ we can always
achieve that $f(U') \cap K' =\emptyset$. But the subgroups
$\Diff_0(U')$ and $\Diff_0(M \setminus K')$ are conjugate in
$\Diff_0(M)$ to $\Diff_0(U)$ and $\Diff_0(M \setminus K)$
respectively, and hence the restriction of the norm $\nu$ to these
subgroups is bounded by the same constant $C$ which yields
inequality \eqref{eq-3mds}. From now on we assume that $f(U) \cap K
= \emptyset$.

Let $N \subset U\setminus L$ be any embedded graph so that the
induced homomorphism $\pi_1(N) \to \pi_1(U\setminus L)$ is an
isomorphism. Put $\Gamma = L \cup N$, and apply the Fundamental
Lemma. We get a diffeomorphism $h$ supported in a ball, and a
diffeomorphism $\phi \in \Diff_0(M\setminus K)$ so that
$f{|}_{\Gamma}=h \circ \phi|_{\Gamma}\;.$ Denote
$\psi=(h\phi)^{-1}f$ and observe that $\psi{|}_{\Gamma}=\id$.

In particular, $\psi$ fixes $L$. We wish to correct $\psi$ and get a
diffeomorphism fixing {\it a neighborhood} of $L$. This is the point
where the graph $N$ enters the play. More precisely, we claim that
there exist diffeomorphisms $\xi,\theta \in \Diff_0(U)$ and $\eta
\in \Diff_0(M \setminus L)$ so that $\psi= \xi\eta\theta$. Indeed,
since $\psi$ fixes $L$, there exists a sufficiently small tubular
neighborhood $V \subset U$ of $L$ and a diffeomorphism $\theta \in
\Diff_0(U)$ so that $\psi\theta^{-1}(V)=V$. Put $\tau:=
\psi\theta^{-1}$. Since $U \setminus L$ retracts to $\partial V$ and
$\psi$ fixes $N$ we conclude that $\tau$ induces the identity
isomorphism of $\pi_1(\partial V)$. It is well known (see e.g.
\cite{Luft}) that therefore $\tau{|}_V:V \to V$ is isotopic to the
identity. Hence there exists a diffeomorphism $\xi \in \Diff_0(U)$
which coincides with $\tau$ on $V$, and so $\eta:= \xi^{-1}\tau$ is
supported in $M \setminus L$. The claim follows.

Finally, write
$$f =h\phi\psi= h\phi\xi\eta\theta\;.$$
Note that $h \in \Diff_0(B)$ where $B$ is a ball, and hence $\nu(h)
\leq C$ where the constant $C$ was chosen in the beginning of the
proof. Furthermore, $\phi \in \Diff_0(M \setminus K)$, $\xi,\theta
\in \Diff_0(U)$ and $\eta \in \Diff_0(M \setminus L)$. Thus $\nu(f)
\leq 5C$ which proves inequality \eqref{eq-3mds}. This completes the
proof. \qed

\medskip
\noindent {\bf Proof of Theorem \ref{thm-uni-perf}(i):} In the proof
above we represented every diffeomorphism from $\Diff_0$ of a closed
connected three-manifold $M$ as a product of $5$ diffeomorphisms
from $\Diff_0$ of portable manifolds. Applying Theorem
\ref{thm-uni-perf-port} we get the desired estimate $cld(M)\leq 10$.
\qed

\medskip
\noindent {\bf Proof of Lemma \ref{lem-fund}:} The proof is
divided into several steps.

\medskip
\noindent{\sc Step 1:} Let $\Gamma, L \subset M$ be disjoint
embedded graphs, and $f_t:\Gamma \to M$ be a smooth isotopy. Put
$\Gamma_t:=f_t(\Gamma)$. We say that the crossing point
$y=f_{\tau}(x) \in \Gamma_{\tau} \cap L$ is {\it generic} if the
points $x$ and $y$ lie in smooth interior parts of $\Gamma$ and
$L$ respectively and
$$f_{\tau*}(T_x\Gamma)\oplus T_yL \oplus \R \cdot
\frac{\partial}{\partial t}{\Big |}_{t=\tau} f_tx = T_yM\;.$$
Introduce two modifications of the isotopy $f_t$ at a generic
crossing point.

\medskip
\noindent{\sc Type I modification (removing the crossing point):}
Here we assume that $L$ is {\bf a segment} with the endpoints $A$
and $B$ and $y= \Gamma_{\tau} \cap L$ is a generic crossing point.
Choose $\epsilon >0$ small enough so that $y$ is the only crossing
point on the time interval $I:= [\tau-\epsilon; \tau+\epsilon]$.
Choose a sufficiently small neighborhood $U$ of $L$. Let $h_s, s
\in I$ be a path in $\Diff_0(U)$ so that $h_s =\id$ outside a
small neighborhood of $s=\tau$, $h_s(L) \subset L$ and $h_s(B) =
B$ for all $s$, and  $h_{\tau}$ shrinks $L$ so that $y \notin
h_\tau (L)$. Replace the piece $\{\Gamma_t\}_{t \in I} $ of the
original isotopy by $\{\Gamma'_t\}_{t \in I}$ where $\Gamma'_t =
h_{t}^{-1}\Gamma_t$. Note that $\Gamma_t \cap h_{t}(L) =
\emptyset$, and hence $\Gamma'_t \cap L = \emptyset$, for all $t
\in I$.

\medskip
\noindent{\sc Type II modification (decomposition):} Here $\Gamma$
and $L$ are arbitrary graphs, and $y = f_{\tau}(x) \in
\Gamma_{\tau}\cap L$ is a generic crossing point. Choose $\epsilon
>0$ small enough so that $y$ is the only crossing point on the
time interval $I:= [\tau-2\epsilon; \tau+2\epsilon]$. There exists
a neighborhood $E$ of $y$ diffeomorphic to a Euclidean cube
$$Q =\{(u,v,w) \in \R^3 \;\Big{|}\;\; |u|,|v|,|w| < 2\epsilon\}\;$$ so that
$L \cap Q$ is the vertical segment $\{u=v=0,w\in
[-2\epsilon;2\epsilon]\}$ and $\Gamma_t \cap Q$ is the segment
$c_{t-\tau}:=\{u =t-\tau, v \in [-2\epsilon;2\epsilon], w =0\}$
for $t \in I$. Thus the isotopy $\Gamma_t$ inside $Q$ is given by
the motion of the segment $c_{-2\epsilon}$ in the $(u,v)$-plane in
the direction of the $u$-axis. In this picture, the crossing point
$y$ is the origin.

Let us agree on the following wording: Suppose that two curves
$\alpha_0$ and $\alpha_1$ in the $(u,v)$-plane are given by the
graphs $\{u =F_0(v)\}$ and $\{u = F_1(v)\}$ of smooth functions
$F_0,F_1:[-2\epsilon;2\epsilon]\to \R$. The {\it linear isotopy}
between $\alpha_0$ and $\alpha_1$ is formed by graphs of $(1-s)F_0
+sF_1$, $s \in [0;1]$.

The modification we are going to describe is local. Fix a smooth
cut-off function $\rho:[-2\epsilon;2\epsilon]\to [0;3\epsilon/2]$
which is supported in a very small neighborhood of $0$ and which
satisfies $\rho(0)=3\epsilon/2$. Denote $\beta^{\pm} = c_{\pm
\epsilon}$. Consider the curve
$$\alpha= \{u = -\epsilon + \rho(v), v \in [-2\epsilon;2\epsilon],w=0\}\;.$$
Modify the original isotopy on the time interval $I':=
[\tau-\epsilon;\tau+\epsilon]$ as follows: first make a linear
isotopy from $\beta^-$ to $\alpha$, and then a linear isotopy from
$\alpha$ to $\beta^+$. We extend the curves appearing in the
process of this isotopy outside $Q$ by appropriate $\Gamma_t$'s
and make an obvious change of time in order to fit into the time
interval $I'$.

The following features of the modified isotopy are crucial for our
further purposes. The isotopy from $\beta^-$ to $\alpha$ can be
realized by an isotopy of diffeomorphisms of $M$ supported in a
ball $B \subset Q$. The isotopy from $\alpha$ to $\beta^+$ does
not hit $L$ and hence can be extended to an ambient isotopy of $M$
which is fixed near $L$.

\medskip
\noindent {\sc Step 2:} After these preliminaries, we pass to the
situation described in the formulation of the lemma: Let $\Gamma,K$
be two disjoint graphs in $M$ and let $f_t:\Gamma \to M$, $t \in
[0;1]$ be a smooth isotopy with $f_1(\Gamma) \cap K = \emptyset$.
After a small perturbation of the isotopy with fixed end points we
can assume that the following conditions hold:

\medskip
\noindent (C1) The set $$\{(x,t) \in \Gamma \times
[0;1]\;\Big{|}\;\; f_t(x) \in K\}$$ consists of $N$ pairs
$(x_i,t_i)$, $i=1,...,N$ so that $\{x_i\}$ are distinct points of
$\Gamma$, $0<t_1<...<t_N<1$ and $y_i=f_{t_i}(x)$ are distinct
generic crossing points.

\medskip
\noindent (C2) The curves $\gamma_i:= \{f_t(x_i)\}_{t \in [0;1]}$
are pairwise disjoint embedded segments.

\medskip
\noindent (C3) For each $i$, the isotopy $f_t: \Gamma\setminus
\{x_i\} \to M$ crosses $\gamma_i$ generically.

\medskip
\noindent We shall remove the latter crossings using the Type I
modification (see Step 1): Note that each such crossing occurs in
the subsegment of $\gamma_i$ which is either of the form
$[x_i;f_{t_i-\delta}x_i]$ or $[f_{t_i+\delta}x_i;f_1x_i]$, where
$\delta>0$ is small enough. We apply Type I modification to these
segments keeping the end point $f_{t_i\pm \delta}x_i$ fixed (such an
end point is denoted by $B$ in the local description of a Type I
modification above). Note that each such modification is localized
near some $\gamma_i$ and hence does not create new crossings, so the
process stops after a finite number of modifications. Thus we
replace assumption (C3) above by a stronger one:

\medskip
\noindent (C3')  For each $i$, the isotopy $f_t: \Gamma\setminus
\{x_i\} \to M$ does not hit $\gamma_i$.

\medskip
\noindent {\sc Step 3:} It would be convenient to make a change of
time in our isotopy as follows. We assume that $f_t$ is defined on
the time interval $t \in [0;N+1]$ and the crossings times are
consecutive integers $t_i =i$, $i=1,...,N$. Assumptions (C1) and
(C2) of the previous step yield existence of embedded pair-wise
disjoint parallelepipeds $P_i \subset M$, $i=1,...,N$ (each
parallelepiped $P_i$ is a neighborhood of the segment $\gamma_i$)
equipped with local coordinates $u \in [-1;N+2], v \in [-1;1], w \in
[-1;1]$ so that the following holds:
$$\gamma_i = \{(u,0,0)\;\Big{|}\;\; u \in [0;N+1]\}\;,$$
$$ K \cap P_i = \{(i,0,w)\;\Big{|}\;\; w \in [-1;1]\},\;\;\; \Gamma \cap P_i= \{(0,v,0)\;\Big{|}\;\; v \in
[-1;1]\},\;$$ and $$f_t(0,v,0) = (t,v,0) \;\;\forall t \in [0;N+1],v
\in [-1;1]\;.$$ In addition, assumption (C3') of the previous step
guarantees that $P_i$'s can be chosen so thin that
\begin{equation}\label{eq-tails}
f_t(\Gamma \setminus P_i) \cap P_i = \emptyset \;\;\forall t \in
[0;N+1]\;.
\end{equation}

\medskip
\noindent {\sc Step 4:} Let $Q_i \subset P_i$ be a sufficiently
small cube centered at the crossing $(i,0,0)$ whose edges have the
length $4\epsilon$ and are parallel to the coordinate axes.
Perform a Type II modification of our isotopy inside $Q_i$: We
keep notations $\alpha_i,\beta^{\pm}_i$ (with the extra sub-index
$i$) for special curves appearing in the description of the
modification presented in Step 1. The reader should have in mind
that the current $u$-coordinate is shifted by $i$ in comparison to
the one of Step 1, and the crossing time $\tau$ equals $i$.

Thus we assume that
$$\beta^{\pm}_i =\{(i\pm\epsilon, v,0)\;\Big{|}\;\; v \in
[-2\epsilon,2\epsilon]\;\}\;.$$ Set
$$\Gamma_i^{-}=f_{i-\epsilon}(\Gamma)\;\;\;\text{and}\;\;\;
\Gamma_i^+ = (f_{i-\epsilon}(\Gamma)\setminus \beta_i^{-})\cup
\alpha_i \;, \;\; i =1,...,N\;.$$  Note that $\Gamma_i^+ =
h_i(\Gamma_i^-)$, where $h_i \in \Diff_0(B_i)$ and $B_i \subset
P_i$ is a ball.

It will be convenient to put $\Gamma_0^+ =\Gamma$ and
$\Gamma_{N+1}^- = f_1(\Gamma)$. Recall that we write $\Gamma_t
=f_t(\Gamma)$.

\medskip
\noindent{\sc Step 5:} Fix $i \in \{0;...;N\}$. Let us focus on
the following isotopy taking $\Gamma_i^+$ to $\Gamma^-_{i+1}$ : we
proceed according to the description of the Type II modification
(see Step 1) until we reach the graph $\Gamma_{i+\epsilon}$ which
extends $\beta^+_i$ (this move is empty when $i=0$), and then move
on with the original isotopy $f_t$ until $\Gamma^-_{i+1}$. Note
that this isotopy does not hit $K$. Furthermore, the
(time-dependent) vector field $\zeta^{(i)}_t$ of this isotopy,
which is defined along the image of $\Gamma_i^+$ at the time
moment $t$, is parallel to the $u$-axis {\it in each of the
parallelepipeds} $P_j$, $j=1,...,N$. Now we shall use property
\eqref{eq-tails} of the original isotopy: It guarantees that one
can cut off $\zeta^{(i)}_t$ near $K$ and extend it to the whole
$M$ so that {\it it remains parallel to the $u$-axis in all
$P_j$'s.} After such an extension we get an isotopy  supported in
$M \setminus K$ so that its time-1-map $\phi_i$ sends $\Gamma_i^+$
to $\Gamma^-_{i+1}$.

The following property of maps $\phi_i$, which readily follows
from the above discussion on vector fields $\zeta^{(i)}_t$, is
crucial for the final step of the proof:
\begin{equation}\label{eq-fin-step}
\phi_N \circ ...\circ \phi_i (B_i) \subset P_i \;\;\forall
i=1,...,N\;.
\end{equation}

\medskip
\noindent {\sc Step 6:} We have
\begin{equation}\label{eq-decomp} f_1|_{\Gamma} = \phi_Nh_N\circ ...\circ
\phi_1h_1\phi_0|_{\Gamma}\;,\end{equation}
where the
diffeomorphisms $h_i \in \Diff_0(B_i)$ and the balls $B_i$ appear
in Step 4, and the diffeomorphisms $\phi_i \in \Diff_0 (M
\setminus K)$ are constructed in the previous step. Put
$$g_i = (\phi_N \circ ...\circ \phi_i)h_i(\phi_N \circ ...\circ
\phi_i)^{-1}\;,\; i=1,...,N\;.$$ Note that $g_i \in \Diff_0(B'_i)$
where $B'_i= \phi_N \circ ...\circ \phi_i (B_i)\;.$ By
\eqref{eq-fin-step}, the balls $B'_i$ are pair-wise disjoint, and
hence the diffeomorphism $h:= g_N \circ ... \circ g_1$ is also
supported in a ball. Finally, put $$\phi=\phi_N \circ ...\circ
\phi_0 \in \Diff_0(M \setminus K)$$ and observe that in view of
equation \eqref{eq-decomp} $f_1|_{\Gamma} = h\phi|_{\Gamma}$. This
finishes off the proof of the lemma. \qed

\bigskip
\noindent {\bf Acknowledgments.} We thank A.~Furman, D.~Calegari,
E.~Ghys, A.~Iozzi, N.~Ivanov, D.~Kotschick, D.~McDuff, A.~Muranov,
Y.~Shalom and A.~Wienhard for useful discussions.

\bigskip

\noindent
\begin{tabular}{l}
Dmitri Burago \\
Department of Mathematics\\
The Pennsylvania State University, \\
University Park, PA 16802, U.S.A. \\
burago@math.psu.edu \\
\end{tabular}

\bigskip
\noindent
\begin{tabular}{l}
Sergei Ivanov \\
Steklov Math. Institute,\\
St.~Petersburg, Russia\\
svivanov@pdmi.ras.ru \\
\end{tabular}

\bigskip
\noindent
\begin{tabular}{l}
Leonid Polterovich \\
School of Mathematical Sciences \\
Tel Aviv University \\
Tel Aviv 69978, Israel \\
polterov@post.tau.ac.il \\
\end{tabular}


\begin{thebibliography}{99}

\bibitem{A} Anderson, R.D., {\it On homeomorphisms as products of conjugates
of a given homeomorphism and its inverse,} Topology of 3-manifolds
and related topics (Proc. The Univ. of Georgia Institute, 1961)
pp. 231--234, Prentice-Hall, Englewood Cliffs, N.J, 1962.

\bibitem{Banyaga} Banyaga, A.,
{\it Sur la structure du groupe des diff{\'e}omorphismes qui
pr{\'e}servent une forme symplectique, } Comm. Math. Helv. {\bf
53}:2 (1978), 174-227.

\bibitem{Barge-Ghys} Barge, J., Ghys, E.,
{\it Cocycles d'Euler et de Maslov, } Math. Ann. {\bf 294}:2
(1992), 235--265.

\bibitem{Bavard} Bavard, C.,
{\it Longueur stable des commutateurs, } L'Enseign. Math. {\bf
37}:1-2 (1991), 109--150.

\bibitem{Bi} Biran, P, {\it From Symplectic Packing to Algebraic Geometry and Back,}
in "Proceedings of the 3'rd European Congress of Mathematics
(Barcelona 2000)", Vol II, pp. 507-524, Progr. Math., 202,
Birkh\"auser 2001.

\bibitem{BEP} Biran, P., Entov, M., Polterovich, L.,
{\it Calabi quasimorphisms for the symplectic ball, } Commun.
Contemp. Math. {\bf 6} (2004), 793--802.

\bibitem{BIW} Burger, M., Iozzi, A., Wienhard, A., {\it Surface
group representations with maximal Toledo invariant},  Preprint
arXiv:math/0605656.

\bibitem{Cor} de Cornulier, Y., {\it Strong boundedness of
$Homeo(S^n)$}, Appendix to Calegari, D., Freedman, M., {\it
Distortion in transformation groups,} Geom. Topol. {\bf 10}(2006),
267--293.


\bibitem{EP} Eliashberg, Y., Polterovich, L., {\it Bi-invariant metrics on the group of Hamiltonian diffeomorphisms,}
Internat. J. Math. {\bf 4} (1993), 727--738.

\bibitem{Entov} Entov, M.,
{\it Commutator length of symplectomorphisms, } Comment. Math.
Helv. {\bf 79}:1 (2004), 58-104.


\bibitem{Entov-Polterovich} Entov, M., Polterovich, L., {\it Calabi quasimorphism
and quantum homology, } Intern. Math. Res. Notices {\bf 30}
(2003), 1635-1676.

\bibitem{EP-qst} Entov, M., Polterovich, L.,
{\it Quasi-states and symplectic intersections, } Comm. Math.
Helv. {\bf 81}:1 (2006), 75-99.


\bibitem{Ghys-Gambaudo} Gambaudo, J.-M., Ghys, E., {\it Commutators
and diffeomorphisms of surfaces}, Ergodic Theory Dynam. Systems
{\bf 24} (2004), 1591--1617.

\bibitem{Kotschick} Kotschick, D., {\it Stable length in stable
groups}, Preprint, 2007.

\bibitem{Luft} Luft, E., {\it Actions of the homeotopy group of an orientable $3$-dimensional
handlebody,} Math. Ann. {\bf 234} (1978), 279--292.






\bibitem{MP} McDuff, D., Polterovich, L., {\it Symplectic packings and algebraic geometry,}
Invent. Math. {\bf 115} (1994), no. 3, 405--434.

\bibitem{MS2} McDuff, D., Salamon, D., {\it
$J$-holomorphic curves and symplectic topology,} AMS, Providence,
2004.


\bibitem{Muranov} Muranov, A., {\it  Finitely generated infinite simple groups of infinite commutator
width,}  International Journal of Algebra and Computation, {\bf 17}
(2007), 607--659.

\bibitem{Oh-spectral} Oh, Y.-G.,
{\it Construction of spectral invariants of Hamiltonian
diffeomorphisms on general symplectic manifolds, } in {\sl The
breadth of symplectic and Poisson geometry, 525-570,} Birkh\"auser,
Boston, 2005.


\bibitem{Pbook} Polterovich, L.,
{\it The geometry of the group of symplectic diffeomorphisms},
Lectures in Mathematics ETH Z\"urich, Birkh\"auser, 2001.

\bibitem{PMontreal} Polterovich, L., {\it Floer homology, dynamics and groups,}
in P.Biran, O.Cornea, F.Lalonde eds., "Morse theoretic methods in
nonlinear analysis and in symplectic topology. Proceedings of the
NATO Advanced Study Institute, Montreal, Canada, July 2004," NATO
Science Series II: Mathematics, Physics and Chemistry 217.
Dordrecht: Springer, 2006.

\bibitem{Py-1} Py, P., {\it Quasi-morphismes et invariant de Calabi,} Ann. Sci. Ecole Norm. Sup.
{\bf 39}(2006), 177--195.

\bibitem{Py-2} Py, P., {\it Quasi-morphismes de Calabi et graphe de Reeb sur le tore,}
C. R. Math. Acad. Sci. Paris {\bf 343}(2006), 323--328.

\bibitem{Schwarz} Schwarz, M.,
{\it On the action spectrum for closed symplectically aspherical
manifolds, } Pacific J. Math. {\bf 193}:2 (2000), 419-461.


\bibitem{Sikorav} Sikorav, J.-C., {\it Systemes Hamiltoniens et topologie
symplectique,} ETS, EDITRICE PISA, 1990.

\bibitem{SL} Witte Morris, D., {\it Bounded generation of $SL(n,A)$ (after D. Carter, G. Keller and E.
Paige)}, Preprint arXiv:math/0503083.

\end{thebibliography}
\end{document}